\date{}
\begin{document}

\centerline{}

\centerline {\Large{\bf A Few Properties of $\delta$-Continuity and $\delta$-Closure on }}
 \centerline {\Large{\bf Delta Weak Topological Spaces}}
\centerline{}
\centerline{\textbf{Sanjay Roy}}
\centerline{Department of Mathematics, Uluberia College}
\centerline{Uluberia, Howrah- 711315,  West Bengal, India}
\centerline{e-mail: sanjaypuremath@gmail.com}
\centerline{}

\newcommand{\mvec}[1]{\mbox{\bfseries\itshape #1}}
\centerline{}
\newtheorem{Theorem}{\quad Theorem}[section]

\newtheorem{definition}[Theorem]{\quad Definition}

\newtheorem{theorem}[Theorem]{\quad Theorem}

\newtheorem{remark}[Theorem]{\quad Remark}

\newtheorem{corollary}[Theorem]{\quad Corollary}

\newtheorem{note}[Theorem]{\quad Note}

\newtheorem{lemma}[Theorem]{\quad Lemma}

\newtheorem{example}[Theorem]{\quad Example}
\newtheorem{notation}[Theorem]{\quad Notation}

\newtheorem{result}[Theorem]{\quad Result}
\newtheorem{conclusion}[Theorem]{\quad Conclusion}

\newtheorem{proposition}[Theorem]{\quad Proposition}
\newtheorem{prop}[Theorem]{\quad Property}

\begin{abstract}
The main aim of this paper is to define a weakest topology $\sigma$ on a linear topological space $(E, \tau)$ such that each $\delta$-continuous functional on $(E, \tau)$ is $\delta$-continuous functional on $(E, \sigma)$ and to find out the relation between the set of these $\delta$-continuous functionals on $(E, \tau)$ and  the set of all $\delta$-continuous functionals on $(E, \sigma)$. Also we find out the closure of a subset $A$ of $E$ on that weakest topology by the $\delta$-closure of $A$ with respect to the given topology.  
\end{abstract}
{\bf Keywords:}  \emph{Delta open set, Delta closed set, Delta Continuous mapping, Delta topological dual, Delta weak topology, Delta bounded set, Delta bounded linear mapping, Delta closure.}\\
\textbf{2010 Mathematics Subject Classification:} 46A03, 58K15 \\

\section{\textbf{Introduction}}
In functional analysis, weak topology on a linear topological space plays an important role to characterize the set of all continuous functionals. As we are all familiar with weak topology, we recall only a few notions of weak topology which will be needed for the discussion of this paper. A linear topology on a linear space $E$ over $K (=\mathbb{R}$ or $\mathbb{C})$ is a topology that makes each operation of $E$ continuous. A locally convex space is a linear topological space that produces a local base containing convex neighbourhoods of null vector $\theta$. To define a weak topology on a linear topological space, we use a well known theorem: Let $E$ be a linear space over $K (=\mathbb{R}$ or $\mathbb{C})$ and $\{P_\alpha:\,\alpha\in \Lambda\}$ be the collection of some seminorms  on $E$. Let $\mathcal{V}=\lbrace B_{P_\alpha}(\theta, r_\alpha):\, r_\alpha>0, \, \alpha\in \Lambda\rbrace$, where $B_{P_\alpha}(\theta, r_\alpha)=\{x\in E: P_\alpha(x)<r_\alpha\}$ $\forall\, \alpha\in \Lambda$ and $\forall \,r_\alpha> 0$. Then the collection $\mathcal{U}$ of all finite intersection of the members of $\mathcal{V}$ forms an absolutely convex local base for the weakest locally convex topology on $E$ such that each $p_\alpha$ is continuous for $\alpha\in \Lambda$. If $\Gamma$ is the collection of some linear functional on a linear space $E$ over $K\,(=\mathbb{R}$ or $\mathbb{C})$ then $P_f(x)=|f(x)|$  is a seminorm for all $f\in \Gamma$. So by these collection of seminorms, we can always construct a weakest locally convex topology on $E$ such that each $P_f$ is continuous. This topology is called the $\Gamma$-topology on $E$ and is denoted by $\sigma (E, \Gamma)$. Now in particular, if $E$ is a linear topological space and $\Gamma=E^{*},$ the collection of all continuous linear functionals on $E$, the $\Gamma$-topology is called the weak topology $\sigma(E, E^{*})$ on $E$. In a linear space $E$, it can be shown that $(E, \sigma(E, \Gamma))^{*}=\Gamma$ if $\Gamma$ is a subspace of $E^{\prime}$, the linear space of all linear functionals. So, in particular $(E, \sigma(E, E^{*}))^{*}=E^{*}$.

Now our target is to define another type of $\Gamma$-topology namely, delta weak topology. To define this type of $\Gamma$-topology, we first introduce a few notions which are already established. 

In 1979, T. Noiri \cite{Noiri} introduced the concept of $\delta$-cluster point. Then he defined $\delta$-closure of a set by the collection of all $\delta$-cluster points. Thereafter he introduced the definition of $\delta$-closed set with the help of $\delta$-closure and defined a $\delta$-open set as a complement of $\delta$-closed set. There he had given the notion of $\delta$-continuous function  and established its few properties. According to Noiri, a function $f:X\rightarrow Y$ is said to be $\delta$-continuous if for each $x\in X$ and each open neighbourhood $V$ of $f(x)$, there exists an open neighbourhood $U$ of $x$ such that $f(int[cl(U)])\subseteq int[cl(V)]$.  
In 1993, S. Raychaudhuri and M. N. Mukherjee \cite{Raychaudhuri} generalized the concepts of $\delta$-continuity and $\delta$-open set and called these general definitions  $\delta$-almost continuity and $\delta$-preopen set respectively.  In 1993, N. Palaniappan and K. C. Rao \cite{Palaniappan} introduced a concepts of regular generalized closed set and regular generalized open set. Thereafter many works have been done on another concepts of $\delta$-semiopen set \cite{Park, Caldas 1}, $\delta$-set \cite{Saleh}, Semi$^{*}\delta$-open set \cite{Missier 2}, regular$^{*}$ open set \cite{Missier 1, Missier 3}. In 2005, M. Caldas \cite{Caldas 2} has investigated to find out some applications of $\delta$-preopen sets.

In 2021, R. M. Latif \cite{Latif} has defined a $\delta$-continuous function in an another way. In this paper we have taken the definition of $\delta$-continuous function as stated by R.M. Latif and by these collection of $\delta$-continuous functions, we have introduced the concept of delta weak topology and tried to find out the behaviour of the collection of all $\delta$-continuous functionals and of the $\delta$-closure of a set on a delta weak topological space. 

\section{Preliminaries}
Here we have given a few basic definitions and theorems which will be needed in the sequel. Throughout this paper we have taken the field $K$ as either the field of real numbers $\mathbb{R}$ or the field of complex numbers $\mathbb{C}$.

\begin{definition}\cite{Noiri}
An open set $U$ of a topological space $(X,\tau)$ is said to be a Regular Open Set if $U=$ $int[cl(U)]$.
\end{definition}

\begin{definition}\cite{Noiri}
A subset $V$ of a topological space $(X,\tau)$ is said to be regular closed if $V=$ $cl[int(V)]$.
\end{definition}

\begin{theorem}\cite{Roy}
$V$ is a regular closed set iff $X\setminus V$ is a regular open set.
\end{theorem}

\begin{result}\cite{Roy}
Let $(X, \tau)$ be a topological space and $U\in \tau$. Then $int[cl(U)]$ is regular open.
\end{result}

\begin{note}\cite{Roy}
 A subset $U$ of $(X,\tau)$ is called $\delta$-open set if for each $x\in U$, $\exists$ a regular open set $P$ such that $x\in P$ $\subseteq U$.
\end{note}

\begin{theorem}\cite{Roy}
$V$ is $\delta$-closed iff $V$ is the intersection of all regular closed set containing $V$.
\end{theorem}

\begin{theorem}\cite{Roy}
The intersection of finitely many regular open sets in any topological space is regular open.
\end{theorem}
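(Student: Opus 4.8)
The plan is to reduce the statement to the case of two regular open sets by induction on the number $n$ of sets, and then to prove the two-set case by verifying both inclusions in the defining equation $U \cap V = \text{int}[cl(U \cap V)]$. For the inductive step, once I know that the intersection of two regular open sets is regular open, I can write $U_1 \cap \cdots \cap U_n = (U_1 \cap \cdots \cap U_{n-1}) \cap U_n$, where the first factor is regular open by the induction hypothesis and the second is regular open by assumption, so the whole expression is the intersection of two regular open sets. Thus the base case $n = 2$ carries the entire argument, and I would state the induction only briefly.

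For the two-set case, let $U$ and $V$ be regular open, so that $U = \text{int}[cl(U)]$ and $V = \text{int}[cl(V)]$. First I would observe that $U \cap V$ is open, being the intersection of two open sets, and hence $U \cap V = \text{int}(U \cap V) \subseteq \text{int}[cl(U \cap V)]$, which uses only the elementary fact that every open set is contained in the interior of its own closure; this disposes of one inclusion immediately. The reverse inclusion is the substantive part. Here I would invoke the standard set-theoretic facts $cl(A \cap B) \subseteq cl(A) \cap cl(B)$ and $\text{int}(A \cap B) = \text{int}(A) \cap \text{int}(B)$, together with monotonicity of the interior operator, to compute
\[
\text{int}[cl(U \cap V)] \subseteq \text{int}[cl(U) \cap cl(V)] = \text{int}[cl(U)] \cap \text{int}[cl(V)] = U \cap V,
\]
where the last equality is exactly the regular openness of $U$ and $V$. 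Combining the two inclusions gives $U \cap V = \text{int}[cl(U \cap V)]$, so $U \cap V$ is regular open.

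The main obstacle, to the extent there is one, lies in the reverse inclusion, and specifically in the monotonicity step that pushes $cl(U \cap V)$ inside $cl(U) \cap cl(V)$ before taking interiors; the forward inclusion and the inductive bookkeeping are routine. I expect no topological subtleties beyond these standard closure and interior identities, so the proof should be short and essentially formal.
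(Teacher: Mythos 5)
Your proof is correct: both inclusions are established by standard closure/interior identities (in particular $\mathrm{int}(A\cap B)=\mathrm{int}(A)\cap \mathrm{int}(B)$ and $cl(A\cap B)\subseteq cl(A)\cap cl(B)$), and the induction reducing to the two-set case is routine. Note that the paper itself states this theorem only as a cited preliminary from \cite{Roy} without reproducing a proof, so there is nothing to compare against; your argument is the standard one and fills that role adequately.
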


\begin{theorem}\cite{Roy}
The arbitrary union of $ \delta$-open sets in any topological spaces is $ \delta$-open.
\end{theorem}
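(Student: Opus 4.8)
The plan is to work directly from the pointwise characterization of $\delta$-openness recorded in the Note above, namely that a set is $\delta$-open precisely when each of its points admits a regular open set squeezed between that point and the set. This local description reduces the union statement to an essentially immediate pointwise argument, and it lets me sidestep having to reason about arbitrary intersections of regular closed sets, which would be forced on me if I instead argued through the complementary ($\delta$-closed) formulation coming from the earlier theorem.

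First I would fix an arbitrary family $\{U_\alpha : \alpha \in \Lambda\}$ of $\delta$-open subsets of $(X,\tau)$ and set $U = \bigcup_{\alpha \in \Lambda} U_\alpha$. To verify that $U$ is $\delta$-open I must exhibit, for each point of $U$, a regular open set containing that point and contained in $U$. So I would take an arbitrary $x \in U$; by the definition of the union there is some index $\alpha_0 \in \Lambda$ with $x \in U_{\alpha_0}$.

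Next, invoking the $\delta$-openness of $U_{\alpha_0}$ at the point $x$, I obtain a regular open set $P$ with $x \in P \subseteq U_{\alpha_0}$. Since $U_{\alpha_0} \subseteq U$, transitivity of inclusion yields $x \in P \subseteq U$, which is exactly the required regular open neighbourhood of $x$ inside $U$. As $x$ was arbitrary, the defining condition of the Note holds at every point of $U$, so $U$ is $\delta$-open, and the proof is complete.

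I do not expect a genuine obstacle here: the only real decision is which of the two available descriptions of a $\delta$-open set to use, and the local regular-open-neighbourhood version makes closure under arbitrary unions transparent, because the witnessing regular open set for a point belonging to one member of the family serves, unchanged, as a witness for that same point in the whole union. By contrast, the complement route would require that an arbitrary intersection of $\delta$-closed sets again be $\delta$-closed, which, although true, is more cumbersome to establish directly from the ``intersection of regular closed sets'' characterization, since arbitrary intersections of regular closed sets need not be regular closed.
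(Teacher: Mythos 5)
Your argument is correct: the pointwise definition of $\delta$-openness from the Note immediately yields that a regular open witness for $x$ in some $U_{\alpha_0}$ is also a witness for $x$ in the union. The paper itself cites this result from an external reference without reproducing a proof, but your argument is the standard and expected one, and it is complete.
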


\begin{theorem}\cite{Roy}
The intersection of finitely many $\delta$-open sets in any topological spaces is $\delta$-open.
\end{theorem}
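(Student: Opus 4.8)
The plan is to reduce the statement to the case of two $\delta$-open sets and then to invoke the earlier theorem guaranteeing that the intersection of finitely many regular open sets is regular open. First I would note that any finite intersection can be formed by intersecting two sets at a time, so by a routine induction on the number of sets it suffices to prove that if $U_1$ and $U_2$ are $\delta$-open, then $U_1 \cap U_2$ is $\delta$-open.

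To handle the two-set case, I would argue directly from the definition of a $\delta$-open set given in the \emph{Note} above. Fix an arbitrary point $x \in U_1 \cap U_2$. Since $x \in U_1$ and $U_1$ is $\delta$-open, there exists a regular open set $P_1$ with $x \in P_1 \subseteq U_1$; likewise, since $x \in U_2$ and $U_2$ is $\delta$-open, there exists a regular open set $P_2$ with $x \in P_2 \subseteq U_2$. Setting $P = P_1 \cap P_2$, I immediately have $x \in P$ and $P \subseteq U_1 \cap U_2$.

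The crux is then to check that $P$ is itself regular open, and this is precisely the content of the earlier theorem asserting that the intersection of finitely many regular open sets is regular open. Applying that theorem to $P_1$ and $P_2$ shows that $P$ is regular open, so $P$ is a regular open set satisfying $x \in P \subseteq U_1 \cap U_2$. Since $x$ was an arbitrary point of $U_1 \cap U_2$, every point of this intersection admits such a regular open set, which is exactly the defining condition for $U_1 \cap U_2$ to be $\delta$-open.

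I do not expect any genuine obstacle here: the only nontrivial input is the stability of regular openness under \emph{finite} intersection, which is already established, and this finiteness is essential, since arbitrary intersections of regular open sets need not be regular open. Once the two-set case is secured, the induction step is entirely formal, so the main thing to get right is the clean invocation of the regular-open intersection theorem at the single point $x$.
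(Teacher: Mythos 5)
Your proof is correct: the paper states this theorem without proof (citing it from the reference [Roy]), but your argument — reducing to two sets by induction, then intersecting the regular open witnesses $P_1$ and $P_2$ at each point and invoking the earlier theorem that finite intersections of regular open sets are regular open — is the standard and essentially the only natural route. The key step, that $P_1 \cap P_2$ is regular open, is exactly where the finiteness hypothesis is used, and you invoke it correctly.
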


\begin{theorem}\cite{Roy}
The intersection of finite collection of $\delta$-closed sets is $\delta$-closed.
\end{theorem}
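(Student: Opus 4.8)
The plan is to reduce the statement to the already-established theorem on arbitrary unions of $\delta$-open sets by passing to complements. First I would recall the defining duality between the two notions: a subset $V$ of $(X,\tau)$ is $\delta$-closed precisely when its complement $X\setminus V$ is $\delta$-open. Given $\delta$-closed sets $V_1,\ldots,V_n$, I would then apply De Morgan's law,
$$X\setminus\bigcap_{i=1}^{n}V_i=\bigcup_{i=1}^{n}(X\setminus V_i),$$
so that the complement of the intersection is exhibited as a union of the sets $X\setminus V_i$, each of which is $\delta$-open by the duality above.

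Next I would invoke the theorem that the arbitrary union of $\delta$-open sets is $\delta$-open to conclude that $\bigcup_{i=1}^{n}(X\setminus V_i)$ is $\delta$-open. Reading the De Morgan identity backwards, $X\setminus\bigcap_{i=1}^{n}V_i$ is $\delta$-open, and hence its complement $\bigcap_{i=1}^{n}V_i$ is $\delta$-closed, which is exactly the claim. Note that this argument never uses finiteness of the collection, so it would in fact deliver the stronger statement that an arbitrary intersection of $\delta$-closed sets is $\delta$-closed; the finite case is all that is needed in the sequel.

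As an alternative that avoids appeal to the complement characterisation, I would argue directly from the theorem that $V$ is $\delta$-closed iff $V$ is the intersection of all regular closed sets containing it. Writing $\operatorname{cl}_\delta(W)$ for that intersection, the inclusion $\bigcap_i V_i\subseteq\operatorname{cl}_\delta(\bigcap_i V_i)$ is automatic, so only the reverse inclusion requires work: if $x\notin\bigcap_i V_i$ then $x\notin V_j$ for some $j$, and since $V_j$ is $\delta$-closed there is a regular closed set $F\supseteq V_j\supseteq\bigcap_i V_i$ with $x\notin F$, which witnesses $x\notin\operatorname{cl}_\delta(\bigcap_i V_i)$. I do not anticipate a genuine obstacle here; the only points that demand care are invoking the correct duality (that the complement of a $\delta$-closed set is $\delta$-open) and keeping the direction of the De Morgan identity straight.
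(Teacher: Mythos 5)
Your proposal is correct: the paper states this result only as a cited preliminary from the communicated reference [13] and gives no proof of its own, and your complement-plus-De Morgan reduction to the theorem on arbitrary unions of $\delta$-open sets is exactly the standard argument (your observation that finiteness is never used, so arbitrary intersections of $\delta$-closed sets are $\delta$-closed, is also right). Your alternative route via the characterisation of $\delta$-closed sets as intersections of regular closed sets is likewise sound and has the mild advantage of not requiring you to first establish the complement duality from the paper's stated preliminaries.
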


\begin{definition}\cite{Latif}
A mapping $f:(X,\tau)\rightarrow (Y,\sigma)$ is said to be $\delta$-continuous if the pre-image of every open subset of $Y$ is a $\delta$-open subset of $X$.
\end{definition}

\begin{definition}
Let $M$ be an absorbing subset of a linear space $E$ over $K$. Let $C_M(x)=\{\lambda>0: \frac{x}{\lambda}\in M\}$. Clearly $C_M(x)\neq \phi$. The mapping $p_M: E\rightarrow \mathbb{R}^+\cup\{0\}$ defined by $p_M(x)=\inf C_M(x)$ for all $x\in E$ is called the Minkowski functional of $M$.
\end{definition}

\begin{definition}
A real valued function $p$ defined on a linear space $E$ over $K$ is called a seminorm if the following conditions are satisfied :\\
$(i)$ $p(x+y)\leq p(x)+p(y)$ for all $x, y\in E$,\\
 $(ii)$ $p(\alpha x)=|\alpha| p(x)$ for all $\alpha\in K$ and for all $x\in E$.
\end{definition}

\begin{theorem}
 Let $M$ be an absorbing subset of a linear space $E$ over $K$ and $p_M$ be the Minkowski functional of $M$. Then \\
 $(i)$ $p_M(t x)=t \,p_M(x)$ for all $x\in E$ and for all $t> 0$,\\
 $(ii)$ $M$ is convex $\Rightarrow$ $p_M(x+y)\leq p_M(x)+p_M(y)$ for all $x, y\in E$,\\
 $(iii)$ $M$ is absolutely convex $\Rightarrow$ $M$ is a seminorm.
 \end{theorem}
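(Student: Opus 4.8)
The plan is to establish the three parts in order, using (i) and (ii) as ingredients for (iii). Throughout, note that since $M$ is absorbing the set $C_M(x)=\{\lambda>0:\, x/\lambda\in M\}$ is nonempty for every $x\in E$, so $p_M(x)=\inf C_M(x)$ is a well-defined nonnegative real number.

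For (i), I would show that the defining set merely rescales. For $t>0$ and $\lambda>0$ we have $\lambda\in C_M(tx)\iff tx/\lambda\in M$; substituting $\mu=\lambda/t$ this reads $x/\mu\in M$, i.e. $\mu\in C_M(x)$. Hence $C_M(tx)=t\,C_M(x)$, and taking infima together with the elementary identity $\inf(tS)=t\inf S$ (valid for $t>0$) gives $p_M(tx)=t\,p_M(x)$.

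For (ii), the key is a convex-combination argument. Fix $x,y\in E$ and $\varepsilon>0$, and choose $\lambda\in C_M(x)$, $\mu\in C_M(y)$ with $\lambda<p_M(x)+\varepsilon$ and $\mu<p_M(y)+\varepsilon$, so that $x/\lambda,\,y/\mu\in M$. I would then write
\[
\frac{x+y}{\lambda+\mu}=\frac{\lambda}{\lambda+\mu}\cdot\frac{x}{\lambda}+\frac{\mu}{\lambda+\mu}\cdot\frac{y}{\mu},
\]
exhibiting $(x+y)/(\lambda+\mu)$ as a convex combination of the two points $x/\lambda,\,y/\mu\in M$ (the coefficients are nonnegative and sum to $1$). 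Convexity of $M$ then yields $(x+y)/(\lambda+\mu)\in M$, so $\lambda+\mu\in C_M(x+y)$ and
\[
p_M(x+y)\le \lambda+\mu< p_M(x)+p_M(y)+2\varepsilon.
\]
Letting $\varepsilon\to 0$ gives subadditivity. I expect this to be the main obstacle, as it is the one step requiring an actual idea rather than bookkeeping.

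For (iii), I would verify the two seminorm axioms; note that absolute convexity means $M$ is both convex and balanced ($\gamma M\subseteq M$ whenever $|\gamma|\le 1$). Subadditivity is immediate from (ii). For absolute homogeneity $p_M(\alpha x)=|\alpha|\,p_M(x)$ I would argue by cases. When $\alpha=0$, balancedness applied with scalar $0$ gives $\theta\in M$, so $C_M(\theta)=(0,\infty)$ and $p_M(\theta)=0$. When $|\beta|=1$, applying the balanced condition to $\beta$ and to $\beta^{-1}$ shows $x/\lambda\in M\iff \beta x/\lambda\in M$, whence $C_M(\beta x)=C_M(x)$ and $p_M(\beta x)=p_M(x)$. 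For general $\alpha\neq 0$, set $\beta=\alpha/|\alpha|$ (so $|\beta|=1$ and $\alpha=|\alpha|\beta$); then by part (i) with $t=|\alpha|$ and the previous case,
\[
p_M(\alpha x)=p_M(|\alpha|\,\beta x)=|\alpha|\,p_M(\beta x)=|\alpha|\,p_M(x).
\]
Together with subadditivity this shows $p_M$ is a seminorm, completing the proof.
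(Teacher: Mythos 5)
Your proof is correct: part (i) via the rescaling identity $C_M(tx)=t\,C_M(x)$, part (ii) via the convex-combination argument with an $\varepsilon$ of room above each infimum, and part (iii) by combining (i), (ii), and balancedness to get absolute homogeneity (reading the statement's ``$M$ is a seminorm'' as the obvious typo for ``$p_M$ is a seminorm''). The paper states this theorem in its Preliminaries as a known result and supplies no proof of its own, so there is nothing to compare against; your argument is the standard one and is complete.
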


\begin{theorem}\label{th5}
In a locally convex space $E$ over $K$, the absolutely convex closed neighbourhoods of $\theta$ forms a local base at $\theta$.
\end{theorem}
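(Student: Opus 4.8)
The statement asserts that the absolutely convex closed neighbourhoods of $\theta$ form a local base, so the plan is to show that every neighbourhood $U$ of $\theta$ contains some absolutely convex closed neighbourhood of $\theta$. Fixing an arbitrary neighbourhood $U$, I would aim to manufacture a single set inside $U$ that is simultaneously convex, balanced (hence absolutely convex) and closed, while remaining a neighbourhood of $\theta$. The three requirements will be installed one at a time, using only that addition and scalar multiplication on $E$ are continuous (since $E$ carries a linear topology) together with the local convexity hypothesis.

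First I would reserve some ``room'' inside $U$ so that a later passage to the closure does not escape $U$: by continuity of addition at $(\theta,\theta)$ choose a neighbourhood $N$ of $\theta$ with $N+N\subseteq U$. Next, by local convexity, choose a convex neighbourhood $G$ of $\theta$ with $G\subseteq N$. To introduce balancedness, I would use continuity of scalar multiplication at $(0,\theta)$ to obtain $\varepsilon>0$ and a neighbourhood $G'\subseteq G$ with $\lambda G'\subseteq G$ for all $|\lambda|\le\varepsilon$, and set $B=\bigcup_{|\lambda|\le\varepsilon}\lambda G'$. Then $B$ is a balanced neighbourhood of $\theta$ contained in $G$. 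Taking the convex hull $W=\operatorname{co}(B)$ yields an absolutely convex set (the convex hull of a balanced set is again balanced and convex), and since $G$ is convex with $B\subseteq G$ we still have $W\subseteq G\subseteq N$; as $W\supseteq B$, it is a neighbourhood of $\theta$.

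Finally I would replace $W$ by its closure $\overline{W}$. The closure of an absolutely convex set in a linear topological space is again absolutely convex, so $\overline{W}$ retains the correct algebraic shape and is a closed neighbourhood of $\theta$. The decisive estimate is $\overline{W}\subseteq W+W$, which follows from the standard description of closure in a topological vector space (namely $\overline{A}\subseteq A+V$ for every neighbourhood $V$ of $\theta$, applied with $V=W$); combining this with $W\subseteq N$ gives $\overline{W}\subseteq W+W\subseteq N+N\subseteq U$. Thus $\overline{W}$ is an absolutely convex closed neighbourhood of $\theta$ lying inside $U$, and since $U$ was arbitrary, the family of such sets is a local base at $\theta$. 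I expect the main obstacle to be exactly this last containment: merely taking the closure of an absolutely convex neighbourhood need not keep it inside $U$, which is precisely why the room $N+N\subseteq U$ must be reserved at the very start rather than afterwards.
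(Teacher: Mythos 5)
Your proof is correct, and it is the standard textbook argument: reserve room via $N+N\subseteq U$, build an absolutely convex neighbourhood $W\subseteq N$ from a convex neighbourhood and a balanced one, and then use $\overline{W}\subseteq W+W$ to keep the closure inside $U$. The paper itself states this result (Theorem \ref{th5}) only as a quoted preliminary and supplies no proof, so there is nothing to compare against; your argument fills that gap correctly.
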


\begin{theorem}
Let $E$ be a linear topological space and $A\subseteq E$. Then\\
$(i)$ $A$ is balanced and $\theta\in int A\Rightarrow int A$ is balanced.\\
$(ii)$ $A$ is convex $\Rightarrow int A$ is convex.
\end{theorem}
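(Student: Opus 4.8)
The plan is to exploit two elementary but decisive facts about the linear topological space $E$. First, for any fixed nonzero scalar $\alpha\in K$ the dilation $x\mapsto \alpha x$ is a homeomorphism of $E$ (its inverse is $x\mapsto \alpha^{-1}x$), so it carries open sets onto open sets. Second, for any open set $U$ and any subset $S$ of $E$ the algebraic sum $U+S=\bigcup_{s\in S}(U+s)$ is open, being a union of translates of $U$, each translate being open because translation is a homeomorphism. Throughout I shall use that $int A$ is the largest open subset of $A$, so that any open set contained in $A$ is automatically contained in $int A$.

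For part $(i)$, I would fix a scalar $\alpha$ with $|\alpha|\leq 1$ and show $\alpha(int A)\subseteq int A$. If $\alpha=0$, then $\alpha(int A)=\{\theta\}$, and this lies in $int A$ precisely because of the hypothesis $\theta\in int A$; this is the only place that hypothesis enters. If $\alpha\neq 0$, then $\alpha(int A)$ is open by the dilation fact. Since $int A\subseteq A$ and $A$ is balanced with $|\alpha|\leq 1$, we get $\alpha(int A)\subseteq \alpha A\subseteq A$, so $\alpha(int A)$ is an open subset of $A$ and therefore sits inside $int A$. As $\alpha$ was arbitrary with $|\alpha|\leq 1$, this shows $int A$ is balanced.

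For part $(ii)$, I would show that for every $t$ with $0<t<1$ one has $t(int A)+(1-t)(int A)\subseteq int A$; the boundary values $t=0,1$ are trivial since $x,y\in int A$. The set $t(int A)+(1-t)(int A)$ is open by the sum-of-open-set fact, each summand being open by the dilation fact as $t,1-t\neq 0$. Because $int A\subseteq A$ and $A$ is convex, every point $tu+(1-t)v$ with $u,v\in int A$ lies in $A$, so this open set is contained in $A$ and hence in $int A$. In particular, for arbitrary $x,y\in int A$ the point $tx+(1-t)y$ lies in $int A$, which is exactly the convexity of $int A$.

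Neither part presents a serious obstacle once the homeomorphism and open-sum observations are in hand. The only subtlety is the degenerate scalar $\alpha=0$ in $(i)$, where the dilation argument collapses to a single point and one must invoke $\theta\in int A$ directly; this explains why that hypothesis is needed in $(i)$ but not in $(ii)$, where the coefficients $t$ and $1-t$ remain strictly positive on the relevant range.
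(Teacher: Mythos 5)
The paper states this result in its Preliminaries section without proof, so there is no in-paper argument to compare against; your proof is correct and is the standard one. Both parts rest on exactly the right facts --- dilation by a nonzero scalar is a homeomorphism, a sum $U+S$ with $U$ open is open, and any open subset of $A$ lies in $int\, A$ --- and your isolation of the degenerate case $\alpha=0$, handled via the hypothesis $\theta\in int\, A$, correctly identifies the only place that hypothesis is needed in $(i)$ and why it is absent from $(ii)$.
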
 

\begin{theorem}
Let $E$ be a complex vector space. If $f$ is a complex linear functional, its real part is a real linear functional. Also, every real linear functional $f_0: E\rightarrow \mathbb{R}$ is the real part of a unique complex linear functional $f: E\rightarrow \mathbb{C}$ where $f(x)=f_0(x)-if_0(ix)$ for all $x\in E$.
\end{theorem}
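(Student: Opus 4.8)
The statement splits into three tasks: showing that the real part of a complex linear functional is real linear, showing that the prescribed formula produces a complex linear functional with the correct real part, and establishing uniqueness. The first task is the easiest, and I would dispatch it directly. Given a complex linear $f$, write $f_0=\mathrm{Re}\,f$. Taking real parts of the identity $f(x+y)=f(x)+f(y)$ yields additivity of $f_0$, and taking real parts of $f(tx)=tf(x)$ for real $t$ yields $f_0(tx)=tf_0(x)$, since a real scalar commutes with the real-part operation. Hence $f_0$ is a real linear functional.

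For the converse, given a real linear $f_0$, I would set $f(x)=f_0(x)-if_0(ix)$ and verify the three required properties in turn. That $\mathrm{Re}\,f=f_0$ is immediate, since both $f_0(x)$ and $f_0(ix)$ are real numbers. Additivity of $f$ follows at once from additivity of $f_0$ applied to each of the two summands. Real homogeneity reduces similarly to the real homogeneity of $f_0$, using $i(tx)=t(ix)$ for $t\in\mathbb{R}$.

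The hard part will be complex homogeneity, and the main obstacle is to reduce it to a single checkable identity rather than treating arbitrary complex scalars head-on. Since $f$ is already additive and real-homogeneous, full complex linearity will follow once I verify that $f(ix)=if(x)$, because any complex scalar decomposes as $a+bi$ with $a,b\in\mathbb{R}$. The plan is to compute $f(ix)=f_0(ix)-if_0(i^{2}x)=f_0(ix)+if_0(x)$, using $f_0(-x)=-f_0(x)$, and to compare this with $if(x)=if_0(x)+f_0(ix)$; the two expressions coincide, giving the desired identity.

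Finally, for uniqueness, suppose $g$ is any complex linear functional with $\mathrm{Re}\,g=f_0$. The key idea is to recover the imaginary part of $g$ from its real part by exploiting complex homogeneity once more. Taking real parts of $g(ix)=ig(x)$ gives $f_0(ix)=\mathrm{Re}\bigl(ig(x)\bigr)=-\mathrm{Im}\,g(x)$, so that $\mathrm{Im}\,g(x)=-f_0(ix)$. This forces $g(x)=f_0(x)-if_0(ix)=f(x)$ for every $x$, which completes the plan.
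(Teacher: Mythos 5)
Your proof is correct. Note that the paper itself offers no proof of this statement: it appears in the Preliminaries as a standard quoted fact (it is the classical real--complex functional correspondence found, e.g., in Rudin's \emph{Functional Analysis}), so there is nothing in the paper to compare against. Your argument is the standard one and all the steps check out: the reduction of complex homogeneity to the single identity $f(ix)=if(x)$ is exactly the right move, the computation $f(ix)=f_0(ix)+if_0(x)=if(x)$ is correct, and the uniqueness step correctly recovers $\operatorname{Im}g(x)=-f_0(ix)$ from $\operatorname{Re}\bigl(ig(x)\bigr)=-\operatorname{Im}g(x)$.
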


\begin{theorem}$($ Hahn-Banach theorem $)$\\
Suppose $(i)$ $M$ is a subspace of a real linear space $E$,\\
$(ii)$ $p:E\rightarrow \mathbb{R}$ is a function satisfying $p(x+y)\leq p(x)+p(y)$ and $p(tx)=t p(x)$ for all $x, y\in E$ and $t\geq 0$,\\
$(iii)$ $f_0$ is a linear functional on $M$ such that $f_0(x)\leq p(x)$ for all $x\in M$.\\
Then there exists a real linear functional $f$ on $E$ such that $f(x)=f_0(x)$ for all $x\in M$ and $-p(-x)\leq f(x)\leq p(x)$ for all $x\in E$.
\end{theorem}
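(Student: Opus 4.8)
\section{Proof proposal}

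The plan is to prove this by the classical two-stage argument: first establish a \emph{one-step extension} lemma, and then invoke Zorn's lemma to push the extension all the way up to $E$. The two-sided bound $-p(-x)\leq f(x)\leq p(x)$ will drop out at the very end from the one-sided estimate $f\leq p$, so the analytic content is concentrated entirely in the extension step.

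First I would set up the one-step extension. Suppose $g$ is a linear functional on a subspace $N$ with $M\subseteq N\subseteq E$ satisfying $g(x)\leq p(x)$ on $N$, and pick any $x_0\in E\setminus N$. Every element of $N'=N+\mathbb{R}x_0$ is uniquely of the form $y+tx_0$ with $y\in N$ and $t\in\mathbb{R}$, so extending $g$ to $N'$ amounts to choosing a single real number $c=g(x_0)$ and setting $g(y+tx_0)=g(y)+tc$. I would show that the requirement $g(y)+tc\leq p(y+tx_0)$ for all $y\in N$ and all $t\in\mathbb{R}$ is equivalent, after dividing by $|t|$ and using the positive homogeneity $p(tx)=tp(x)$ together with the linearity of $g$, to the pair of conditions
\begin{equation*}
\sup_{v\in N}\bigl(g(v)-p(v-x_0)\bigr)\;\leq\;c\;\leq\;\inf_{u\in N}\bigl(p(u+x_0)-g(u)\bigr).
\end{equation*}
A valid $c$ therefore exists precisely when the left-hand supremum does not exceed the right-hand infimum, that is, when $g(u)+g(v)\leq p(u+x_0)+p(v-x_0)$ holds for all $u,v\in N$.

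The crux of the whole proof is verifying this last inequality, and here I expect the only genuine work to lie. It follows from the hypotheses on $g$ and $p$: by linearity $g(u)+g(v)=g(u+v)$, by the domination on $N$ this is at most $p(u+v)$, and by subadditivity $p(u+v)=p\bigl((u+x_0)+(v-x_0)\bigr)\leq p(u+x_0)+p(v-x_0)$. Hence a suitable $c$ exists and $g$ extends to $N'$ with the domination by $p$ preserved.

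Finally I would run Zorn's lemma. Consider the collection of all pairs $(N,g)$ where $N$ is a subspace of $E$ containing $M$ and $g$ is a linear extension of $f_0$ dominated by $p$ on $N$, ordered by extension. Every chain has an upper bound, namely the functional defined on the union of the domains, so Zorn's lemma yields a maximal element $(N^{*},f)$. If $N^{*}\neq E$, the one-step lemma would properly extend it, contradicting maximality; thus $N^{*}=E$ and $f(x)\leq p(x)$ for all $x\in E$. Applying this to $-x$ gives $-f(x)=f(-x)\leq p(-x)$, hence $f(x)\geq -p(-x)$, which together with $f(x)\leq p(x)$ establishes the desired two-sided bound and completes the proof.
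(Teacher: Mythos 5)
Your proof is correct: the one-step extension inequality, the reduction to $g(u)+g(v)\leq p(u+x_0)+p(v-x_0)$, and the Zorn's lemma argument are all carried out properly, and the two-sided bound follows from applying $f\leq p$ at $-x$ exactly as you say. Note that the paper itself states this Hahn--Banach theorem only as a background result in its preliminaries and gives no proof, so there is nothing in the paper to compare against; what you have written is the standard classical argument and it is sound.
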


\section{Delta Weak Topology}
\begin{definition}
Let $(E, \tau)$ be a linear topological space over the field $K= (\mathbb{R}$ or $\mathbb{C})$. The collection of all linear $\delta$-continuous functionals is called $\delta$- topological dual of $E$ and it is denoted by $\delta$-$E^{*}$. As every $\delta$-continuous mapping $f$ is   continuous, $\delta$-$E^{*}\subseteq E^{*}$.
\end{definition}

\begin{definition}\label{d2}
Let $(E, \tau)$ be a linear topological space over the field $K= (\mathbb{R}$ or $\mathbb{C})$. Let $\delta$-$E^{*}$ be the $\delta$- topological dual of $E$. For each element $f\in \delta$-$E^{*}$ we  define a seminorm $p_f(x)=|f(x)|$ where $x\in E$. Then the collection of all such seminorms $\{p_f:\, f\in \delta$-$E^{*}\}$ induces a topology on $E$. This topology is called the delta weak topology $(\delta$- weak topology $)$ and it is denoted by $\sigma (E, \delta$-$E^{*})$.
\end{definition}

\begin{remark}
From the above Definition \ref{d2}, it is clear that $\sigma (E, \delta$-$E^{*})$ is the smallest locally convex topology on $E$ defined by the seminorms $\{p_f:\, f\in \delta$-$E^{*}\}$ such that each $p_f$ is continuous on $(E, \sigma (E, \delta$-$E^{*}))$ where $p_f(x)=|f(x)|$ for all $x\in E$. A local base at $\theta$, the null vector of $E$ on $(E, \sigma (E, \delta$-$E^{*}))$ consists the sets of the form $v(f_1, f_2,\cdots, f_n; r_1, r_2,\cdots,r_n)=\cap_{i=1}^{n}\{x\in E: \,|f_i(x)|<r_i\} $ where $n\in \mathbb{N}$, $f_i\in \delta$-$E^{*}$ and $r_i> 0$ for $i=1, 2, \cdots, n$.
\end{remark}

\begin{theorem}\label{th1}
If $(E, \tau)$ is a linear topological space then  $\sigma (E, \delta$-$E^{*})\subseteq\sigma (E, E^{*})$, where $\sigma (E, E^{*})$ is the weak topology on $E$.
\end{theorem}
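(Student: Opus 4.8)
The plan is to compare the two topologies through their local bases at the null vector $\theta$, exploiting the containment $\delta$-$E^{*}\subseteq E^{*}$ already recorded in the definition of the $\delta$-topological dual. Both $\sigma(E,\delta$-$E^{*})$ and $\sigma(E,E^{*})$ are locally convex topologies generated by families of seminorms of the form $p_f(x)=|f(x)|$, hence each is a translation-invariant linear topology. Consequently it suffices to show that every basic neighbourhood of $\theta$ in $\sigma(E,\delta$-$E^{*})$ is a neighbourhood of $\theta$ in $\sigma(E,E^{*})$, and then to upgrade this comparison at $\theta$ to an inclusion of topologies.

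First I would recall the explicit local bases supplied in the Remark following Definition \ref{d2}. A typical basic neighbourhood of $\theta$ in $\sigma(E,\delta$-$E^{*})$ has the form $v(f_1,\ldots,f_n;r_1,\ldots,r_n)=\cap_{i=1}^{n}\{x\in E:\,|f_i(x)|<r_i\}$, where $n\in\mathbb{N}$, each $f_i\in\delta$-$E^{*}$ and each $r_i>0$. The analogous basic neighbourhoods for the weak topology $\sigma(E,E^{*})$ are formed in exactly the same way, except that the functionals are allowed to range over all of $E^{*}$.

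The decisive step is then immediate: since $\delta$-$E^{*}\subseteq E^{*}$, every functional $f_i$ occurring in a $\delta$-weak basic neighbourhood already lies in $E^{*}$, so the very same set $v(f_1,\ldots,f_n;r_1,\ldots,r_n)$ is simultaneously a basic neighbourhood of $\theta$ for $\sigma(E,E^{*})$. Thus each member of the local base at $\theta$ for $\sigma(E,\delta$-$E^{*})$ is open in $\sigma(E,E^{*})$. To finish, let $G$ be any $\sigma(E,\delta$-$E^{*})$-open set and $x_0\in G$; by translation invariance there is a basic $\delta$-weak neighbourhood $V$ of $\theta$ with $x_0+V\subseteq G$, and since $V$ is also weakly open, $x_0+V$ is a weak neighbourhood of $x_0$ contained in $G$. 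As $x_0$ was arbitrary, $G$ is $\sigma(E,E^{*})$-open, yielding $\sigma(E,\delta$-$E^{*})\subseteq\sigma(E,E^{*})$.

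The whole argument is essentially bookkeeping about seminorm-generated topologies, and I do not anticipate any serious obstacle. The one principle doing the real work is the monotonicity fact that a smaller family of seminorms produces a coarser topology; here this is made fully concrete by the explicit local bases, so the only points needing care are that the listed sets genuinely constitute local bases and that translation invariance legitimizes passing from the comparison at $\theta$ to an arbitrary point of $E$.
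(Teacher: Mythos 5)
Your proposal is correct and follows essentially the same route as the paper: both arguments rest on the single observation that $\delta$-$E^{*}\subseteq E^{*}$, so every (sub)basic neighbourhood of $\theta$ for $\sigma(E,\delta$-$E^{*})$ is already a (sub)basic neighbourhood for $\sigma(E,E^{*})$. You merely spell out the translation-invariance step that the paper leaves implicit.
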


\begin{proof}
Since $f\in\delta$-$E^{*}$ implies that $f\in E^*$, the subbasic open neighbourhood $B_f^\delta(\theta, r)=\{x\in E: |f(x)|< r\}$ of $\theta$ in $\sigma (E, \delta$-$E^{*})$ is a subbasic open neighbourhood of $\theta$ in $\sigma (E, E^{*})$. So,  $\sigma (E, \delta$-$E^{*})\subseteq\sigma (E, E^{*})$.
\end{proof}

\begin{note}\label{n2}
If $(E, \tau)$ is a linear topological space then we know that $\sigma (E, E^{*})\subseteq \tau$. So from the above Theorem \ref{th1}, it follows that $\sigma (E, \delta$-$E^{*})\subseteq\sigma (E, E^{*})\subseteq \tau$.
\end{note}

\begin{note} \label{n1}
Let $(E, \tau)$ be a linear topological space. Then $(E, \sigma (E, \delta$-$E^{*}))^*=\delta$-$E^{*}$ as $\delta$-$E^{*}$ is a linear subspace of $E^{'}$, the linear space of all linear functionals, that is, the collection of all continuous linear functionals of $(E, \sigma (E, \delta$-$E^{*}))$ is the set $\delta$-$E^{*}$.
\end{note}

\begin{theorem}\label{th2}
Let $(E, \tau)$ be a linear topological space over $K$. Then $\sigma (E, \delta$-$E^{*})$ is the weakest topology on $E$ such that each member of $\delta$-$E^{*}$ is $\delta$-continuous. 
\end{theorem}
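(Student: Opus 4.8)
The statement asserts two things, and the natural plan is to establish them in turn: first, that every $f\in\delta\text{-}E^{*}$ is indeed $\delta$-continuous on $(E,\sigma(E,\delta\text{-}E^{*}))$; and second, that this topology is contained in any topology on $E$ enjoying the same property, so that it is the weakest among them. The point to keep in mind throughout is that, unlike the classical characterisation of the weak topology, here I must produce $\delta$-openness of preimages (via regular open sets), which is genuinely stronger than ordinary openness, so I cannot merely transcribe the standard weak-topology argument.

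For the first half, I would begin from Note \ref{n1}, which gives that each $f\in\delta\text{-}E^{*}$ is (ordinarily) continuous on $(E,\sigma(E,\delta\text{-}E^{*}))$; in particular $p_f=|f|$ is continuous there. The crucial lemma is that the subbasic neighbourhood $B_f^{\delta}(\theta,r)=\{x:|f(x)|<r\}$ is not merely open but regular open in $\sigma(E,\delta\text{-}E^{*})$. I would prove this by computing, for $f\neq\theta$, that $cl(B_f^{\delta}(\theta,r))=\{x:|f(x)|\le r\}$ and $int[cl(B_f^{\delta}(\theta,r))]=\{x:|f(x)|<r\}$, the non-trivial inclusions in each case coming from a ray argument $t_nx\to x$ with $t_n\uparrow 1$ (respectively $s_nx\to x$ with $s_n\downarrow 1$) using continuity of scalar multiplication; the case $f=\theta$ is immediate since then the set is $E$. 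Since $\sigma(E,\delta\text{-}E^{*})$ is a linear topology, translation is a homeomorphism and hence preserves regular openness, so every translate $\{x:|f(x)-f(x_0)|<r\}=f^{-1}(B(f(x_0),r))$ is regular open as well. Then, for any open $V\subseteq K$ and any $x_0\in f^{-1}(V)$, choosing $\varepsilon>0$ with $B(f(x_0),\varepsilon)\subseteq V$ yields the regular open set $P=\{x:|f(x)-f(x_0)|<\varepsilon\}$ with $x_0\in P\subseteq f^{-1}(V)$; by the defining property of a $\delta$-open set this shows $f^{-1}(V)$ is $\delta$-open, i.e. $f$ is $\delta$-continuous on $(E,\sigma(E,\delta\text{-}E^{*}))$.

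For the second half, let $\tau'$ be any topology on $E$ for which every $f\in\delta\text{-}E^{*}$ is $\delta$-continuous. A subbasis for $\sigma(E,\delta\text{-}E^{*})$ is furnished by the translated seminorm neighbourhoods, i.e. by the sets $f^{-1}(V)$ with $f\in\delta\text{-}E^{*}$ and $V$ open in $K$. By hypothesis each such $f^{-1}(V)$ is $\delta$-open in $(E,\tau')$; but every $\delta$-open set is open, since each of its points admits a regular open, hence open, neighbourhood inside it, so $f^{-1}(V)\in\tau'$. As these sets generate $\sigma(E,\delta\text{-}E^{*})$, it follows that $\sigma(E,\delta\text{-}E^{*})\subseteq\tau'$. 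Combining the two halves, $\sigma(E,\delta\text{-}E^{*})$ renders every $f\in\delta\text{-}E^{*}$ $\delta$-continuous and is contained in every other topology doing so, which is precisely the assertion.

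I expect the main obstacle to lie in the first half, specifically in verifying that the seminorm slabs $\{x:|f(x)|<r\}$ are regular open rather than merely open; the identity $int[cl(\cdot)]=\{x:|f(x)|<r\}$ is where the real content sits, and it is exactly what upgrades continuity of the coordinate functionals to $\delta$-continuity. The second half is essentially formal once one records the implication that $\delta$-open sets are open and pins down the subbasis, so I anticipate no serious difficulty there.
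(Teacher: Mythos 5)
Your proposal is correct, and its skeleton matches the paper's: both halves reduce to showing that the subbasic slab $\{x:|f(x)|<r\}$ is regular open in $\sigma(E,\delta\text{-}E^{*})$, after which minimality is the formal observation that $\delta$-continuity implies continuity. Where you genuinely diverge is in how that key regular-openness is verified, and in what you use to produce the slab in the first place. The paper first invokes the hypothesis $f\in\delta\text{-}E^{*}$ on $(E,\tau)$ to extract a regular open $W\in\tau$ and a balanced neighbourhood $B\subseteq W$, from which it manufactures the radius $r$; it then proves $int_{\sigma}cl_{\sigma}V=V$ by a mixed argument, bounding $cl_{\sigma}V\subseteq\{|f|\le r\}$ via continuity of $p_f$ on $\sigma$ but computing the interior in the \emph{original} topology, $int_{\sigma}\{|f|\le r\}\subseteq int_{\tau}\{|f|\le r\}=\{|f|<r\}$, using $\sigma\subseteq\tau$ and the openness of $f$ on $(E,\tau)$. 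You instead work entirely inside $(E,\sigma(E,\delta\text{-}E^{*}))$, computing $cl$ and $int[cl]$ of the slab by dilation arguments ($t_nx\to x$ with $t_n\uparrow 1$, resp. $s_n\downarrow 1$), which needs only that $\sigma$ is a linear topology in which $f$ is continuous. This buys two things: your first half is self-contained (it never detours through $(E,\tau)$, and in fact shows that any functional in the defining family of a $\Gamma$-topology is automatically $\delta$-continuous for that topology), and you make explicit the translation step that upgrades regular openness at $\theta$ to $\delta$-openness of $f^{-1}(V)$ at every point, which the paper leaves implicit behind the phrase ``it is enough to show.'' The paper's route, conversely, makes visible how the hypothesis $f\in\delta\text{-}E^{*}$ on the original space could be exploited, though that hypothesis is not actually needed for this half. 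Your second half coincides with the paper's.
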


\begin{proof}
Let $f\in \delta$-$E^{*}$.   We first show that $f$ is $\delta$-continuous on $(E, \sigma (E, \delta$-$E^{*}))$. As $(E, \sigma (E, \delta$-$E^{*}))$ is a Locally convex space, it is enough to show that for any open neighbourhood $U$ of $0$ in $K$, there exists $V\in \sigma (E, \delta$-$E^{*})$ such that  $int_\sigma cl_\sigma V=V$ and $\theta\in V\subseteq f^{-1}(U)$, where   $int_\sigma$ and $cl_\sigma$ are denoted respectively by interior and closure operators of $(E, \sigma (E, \delta$-$E^{*}))$. 

Since $f\in \delta$-$E^{*}$, there exists $W\in \tau$ such that  $int_\tau cl_\tau W=W$ and $\theta\in W\subseteq f^{-1}(U)$ where   $int_\tau$ and $cl_\tau$ are denoted respectively by interior and closure operators of $(E, \tau)$. Then there exists a balanced neighbourhood $B$ of $\theta$ on $(E, \tau)$ such that  $\theta\in B\subseteq W\subseteq f^{-1}(U)$. Since $f$ is linear on a linear topological space, $f$ is an open mapping on $(E, \tau)$. So $f(B)$ is a balanced neighbourhood of $0$ in $U$.  So there exists $r>0$ such that $B\subseteq f^{-1}(-r, r)=\{x\in E: |f(x)|<r\}\subseteq W\subseteq f^{-1}(U)$ and $(-r, r)\subseteq U$. Let $V=\{x\in E: |f(x)|<r\}$. Then $V$ is a subbasic open neighbourhood of $\theta$ on $(E, \sigma (E, \delta$-$E^{*}))$ and  $\theta\in V\subseteq f^{-1}(U)$. Now obviously $V\subseteq int_\sigma cl_\sigma V$. \\
Now since $p_f$ is continuous on $(E, \sigma (E, \delta$-$E^{*}))$, $\{x\in E: |f(x)|=p_f(x)\leq r\}$ is closed in $\sigma (E, \delta$-$E^{*})$. So, $cl_\sigma V\subseteq \{x\in E: |f(x)|\leq r\}$. Then 
$int_\sigma cl_\sigma V\subseteq int_\sigma\{x\in E: |f(x)|\leq r\}\subseteq int_\tau \{x\in E: |f(x)|\leq r\}= \{x\in E: |f(x)|< r\}=V$ as $f$ is continuous on $(E,\tau)$.
Thus $V$ is a regular open set on $(E, \sigma (E, \delta$-$E^{*}))$. So each member of $\delta$-$E^{*}$ is $\delta$-continuous on $(E, \sigma (E, \delta$-$E^{*}))$. \\
Now let $\gamma$ be any topology on $E$ such that each member of $\delta$-$E^{*}$ is $\delta$-continuous on $(E, \gamma)$. Let  $\{x\in E: |f(x)|< r\}$ be a subbasic $\sigma (E, \delta$-$E^{*})$-open neighbourhood of $\theta$, where $f\in \delta$-$E^{*}$. Since   $f\in \delta$-$E^{*}$, $f$ is $\delta$-continuous on $(E, \gamma)$, that is, continuous on $(E, \gamma)$.  Thus $\{x\in E: |f(x)|< r\}\in \gamma$. This completes the proof.
\end{proof}

\begin{theorem}\label{th3}
Let $(E, \tau)$ be a linear topological space over $K$. Then $\delta$-$(E, \sigma (E, \delta$-$E^{*}))^{*}= \delta$-$E^{*}$.
\end{theorem}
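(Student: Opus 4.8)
The plan is to prove the equality $\delta$-$(E, \sigma(E, \delta$-$E^{*}))^{*}=\delta$-$E^{*}$ by establishing the two inclusions separately, relying entirely on results already available. For brevity write $\sigma=\sigma(E, \delta$-$E^{*})$ for the delta weak topology. The left-hand side is the set of linear functionals that are $\delta$-continuous with respect to $\sigma$, while the right-hand side is the set of linear functionals that are $\delta$-continuous with respect to the original topology $\tau$; the task is therefore to pass between $\delta$-continuity relative to $\sigma$ and $\delta$-continuity relative to $\tau$, and the two inclusions turn out to be mediated by two quite different results.

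First I would handle the inclusion $\delta$-$E^{*}\subseteq\delta$-$(E, \sigma)^{*}$, which is immediate from Theorem \ref{th2}. That theorem states precisely that each member of $\delta$-$E^{*}$ is $\delta$-continuous on $(E, \sigma)$; since each such member is by definition linear, it belongs to $\delta$-$(E, \sigma)^{*}$. No further computation is required here beyond citing the theorem.

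For the reverse inclusion $\delta$-$(E, \sigma)^{*}\subseteq\delta$-$E^{*}$, I would use the elementary fact, recorded in the definition of the $\delta$-topological dual, that every $\delta$-continuous map is continuous. Hence if $f$ is a linear functional that is $\delta$-continuous on $(E, \sigma)$, then $f$ is continuous on $(E, \sigma)$, so $f\in (E, \sigma)^{*}$. By Note \ref{n1} we have $(E, \sigma)^{*}=\delta$-$E^{*}$, and therefore $f\in\delta$-$E^{*}$. This yields the chain $\delta$-$(E, \sigma)^{*}\subseteq (E, \sigma)^{*}=\delta$-$E^{*}$, and combining the two inclusions gives the claimed equality.

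The present argument is essentially bookkeeping; the substantive content has already been discharged earlier. The one step that genuinely required effort is the inclusion $\delta$-$E^{*}\subseteq\delta$-$(E, \sigma)^{*}$, that is, Theorem \ref{th2}, whose proof had to show that the basic sets $\{x\in E:\,|f(x)|<r\}$ are regular open in $(E, \sigma)$, so that each $f$ is not merely continuous but $\delta$-continuous on the weak topology. Granting that theorem together with the duality identity of Note \ref{n1}, the equality here follows with no additional obstacle.
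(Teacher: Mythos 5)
Your proof is correct, and your forward inclusion is handled exactly as in the paper, by citing Theorem \ref{th2}. For the reverse inclusion, however, you take a genuinely different and shorter route. Writing $\sigma=\sigma(E, \delta$-$E^{*})$, you observe that $\delta$-continuity implies continuity, so $\delta$-$(E, \sigma)^{*}\subseteq (E, \sigma)^{*}$, and then invoke the duality identity $(E, \sigma)^{*}=\delta$-$E^{*}$ of Note \ref{n1}. The paper instead argues directly: given $f\in\delta$-$(E, \sigma)^{*}$ and an open neighbourhood $U$ of $f(\theta)$, it takes a regular $\sigma$-open set $V=\cap_{i=1}^{n}\{x\in E: |f_i(x)|<r_i\}$ with $\theta\in V\subseteq f^{-1}(U)$, uses that each $f_i\in\delta$-$E^{*}$ to produce regular $\tau$-open sets $V_i$ with $\theta\in V_i\subseteq\{x\in E: |f_i(x)|<r_i\}$, and verifies that $\cap_{i=1}^{n}V_i$ is again regular $\tau$-open, thereby exhibiting $f$ as $\delta$-continuous on $(E,\tau)$ by a direct construction. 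Your argument buys brevity and yields as a byproduct the identity $\delta$-$(E, \sigma)^{*}=(E, \sigma)^{*}$, i.e.\ that on the delta weak topology $\delta$-continuity and continuity coincide for linear functionals; its cost is that all the substance is outsourced to Note \ref{n1}, which the paper states without proof by appeal to the classical fact that $(E, \sigma(E,\Gamma))^{*}=\Gamma$ for a subspace $\Gamma$ of $E^{\prime}$. The paper's longer construction is self-contained with respect to that note. Both arguments are valid given the results the paper makes available.
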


\begin{proof}
From the above Theorem \ref{th2}, we can say that $\delta$-$E^{*}\subseteq\delta$-$(E, \sigma (E, \delta$-$E^{*}))^{*}$. 
Let $f\in \delta$-$(E, \sigma (E, \delta$-$E^{*}))^{*}$ and $U$ be any open neighbourhood of $f(\theta)$. Then there exists $V\in \sigma (E, \delta$-$E^{*})$ such that $\theta\in V\subseteq f^{-1}(U)$ and $int_\sigma cl_\sigma V=V$. Since $V\in \sigma (E, \delta$-$E^{*})$, there exist $f_1, f_2, \cdots, f_n \in \delta$-$E^{*}$ such that $V=\cap_{i=1}^n\{x\in E: |f_i(x)|<r_i\}$, where $r_i> 0$ for $i=1, 2, \cdots, n$. Since each $f_i\in \delta$-$E^{*}$, $\{x\in E: |f_i(x)|<r_i\}= f_i^{-1}(B(0, r_i))$ is a $\delta$-open on $(E, \tau)$ for $i=1, 2, \cdots, n$. So there exists regular open set $V_i$ on $(E, \tau)$ such that $\theta\subseteq V_i\subseteq \{x\in E: |f_i(x)|<r_i\}$ for $i=1, 2, \cdots, n$.   So $\theta\in \cap_{i=1}^n V_i\subseteq V\subseteq f^{-1}(U)$.\\
Obviously, $\cap_{i=1}^n V_i\subseteq int_\tau cl_\tau[ \cap_{i=1}^n V_i]$. Now $int_\tau cl_\tau [\cap_{i=1}^n V_i]\subseteq int_\tau [  \cap_{i=1}^ncl_\tau V_i]\subseteq  \cap_{i=1}^n[ int_\tau cl_\tau V_i]= \cap_{i=1}^n V_i$ as each $V_i$ is regular on $(E, \tau)$.  So for any open neighbourhood $U$  of $f(\theta)$, there exists a regular open set $\cap_{i=1}^n V_i$ on $(E,\tau)$ such that $\theta\in \cap_{i=1}^n V_i\subseteq f^{-1}(U)$. Again since $E$ is a linear topological space and $f$ is linear, $f\in \delta$-$E^{*}$. Thus $\delta$-$(E, \sigma (E, \delta$-$E^{*}))^{*}\subseteq \delta$-$E^{*}$.  Hence $\delta$-$(E, \sigma (E, \delta$-$E^{*}))^{*}= \delta$-$E^{*}$.
\end{proof}

\begin{theorem}
Let $(E, \tau)$ be a linear topological space. Then $\delta$-$(E, \sigma (E, \delta$-$E^{*}))^{*}= \delta$-$(E, \sigma(E, E^{*}))^{*}$.
\end{theorem}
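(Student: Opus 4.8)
The plan is to reduce the claim to the single identity $\delta$-$(E,\sigma(E,E^{*}))^{*}=\delta$-$E^{*}$, after which the statement is immediate: Theorem \ref{th3} already records $\delta$-$(E,\sigma(E,\delta$-$E^{*}))^{*}=\delta$-$E^{*}$, so once the $\delta$-topological dual of the ordinary weak topology $\sigma(E,E^{*})$ is also identified with $\delta$-$E^{*}$, both sides of the asserted equality equal $\delta$-$E^{*}$. Thus all of the work lies in proving $\delta$-$(E,\sigma(E,E^{*}))^{*}=\delta$-$E^{*}$, and I would establish this by two inclusions, each modelled on an earlier proof.

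For $\delta$-$E^{*}\subseteq\delta$-$(E,\sigma(E,E^{*}))^{*}$ I would repeat the mechanism of Theorem \ref{th2} with the target topology $\sigma(E,\delta$-$E^{*})$ replaced by $\sigma(E,E^{*})$. A functional $f\in\delta$-$E^{*}$ lies in $E^{*}$, hence is continuous for $\sigma(E,E^{*})$ and $p_f=|f|$ is $\sigma(E,E^{*})$-continuous. Given an open $U\ni 0$ in $K$, choose $r>0$ with $B(0,r)\subseteq U$ and put $V=\{x\in E:|f(x)|<r\}$. Then $V\subseteq int_\sigma cl_\sigma V$ automatically, while $cl_\sigma V\subseteq\{x\in E:|f(x)|\le r\}$ by continuity of $p_f$; combining this with $\sigma(E,E^{*})\subseteq\tau$ (Note \ref{n2}) and the $\tau$-continuity of $f$ collapses $int_\sigma cl_\sigma V$ back into $V$, exactly as in Theorem \ref{th2}. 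So $V$ is a regular open $\sigma(E,E^{*})$-neighbourhood inside $f^{-1}(U)$, and $f$ is $\delta$-continuous on $(E,\sigma(E,E^{*}))$.

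For the reverse inclusion $\delta$-$(E,\sigma(E,E^{*}))^{*}\subseteq\delta$-$E^{*}$ I would imitate Theorem \ref{th3}. Taking $f$ that is $\delta$-continuous on $(E,\sigma(E,E^{*}))$ and an open $U\ni f(\theta)$, there is a regular open $\sigma(E,E^{*})$-set $V$ with $\theta\in V\subseteq f^{-1}(U)$, and $V$ contains a basic weak neighbourhood $\cap_{i=1}^{n}\{x\in E:|g_i(x)|<r_i\}$ with $g_i\in E^{*}$ and $r_i>0$. The aim is to exhibit a single regular open set of the original topology $\tau$ sitting inside $f^{-1}(U)$, for then $f^{-1}(U)$ is $\delta$-open on $(E,\tau)$ and, by linearity together with the translation-invariance of regular openness, $f\in\delta$-$E^{*}$. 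The candidate is $\cap_{i=1}^{n}g_i^{-1}(B(0,r_i))$, which is regular open in $\tau$ once each factor is, by the Section~2 result that a finite intersection of regular open sets is regular open.

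The main obstacle is precisely this last point: I must show each sublevel set $\{x\in E:|g_i(x)|<r_i\}$ is regular open in $(E,\tau)$ knowing only that $g_i$ is continuous, not that $g_i\in\delta$-$E^{*}$ (this is where the weak case genuinely differs from Theorem \ref{th3}, whose functionals were already $\delta$-continuous). The clean route is to use that a nonzero continuous linear functional on a linear topological space is an open map: then $cl_\tau\{|g_i|<r_i\}\subseteq\{|g_i|\le r_i\}$ while no point with $|g_i|=r_i$ can be $\tau$-interior to $\{|g_i|\le r_i\}$, so $int_\tau\{|g_i|\le r_i\}=\{|g_i|<r_i\}$ and hence $\{|g_i|<r_i\}=int_\tau cl_\tau\{|g_i|<r_i\}$ (the case $g_i=0$ being trivial). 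This step is the crux, since it is exactly where ordinary continuity of the $g_i$ must be upgraded to the regular open structure that $\delta$-continuity demands; with it secured, both inclusions close and, via Theorem \ref{th3}, the two $\delta$-duals coincide.
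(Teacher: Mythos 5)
Your proposal is correct, and one half of it is essentially the paper's own argument: the inclusion $\delta$-$(E,\sigma(E,E^{*}))^{*}\subseteq\delta$-$E^{*}$, obtained by extracting a basic weak neighbourhood $\cap_{i=1}^{n}\{x\in E:|g_i(x)|<r_i\}$ from the regular open set $V$, showing each sublevel set is regular open in $\tau$, and intersecting, is precisely the paper's first paragraph --- indeed you are more careful than the paper, which writes $V$ as \emph{equal to} a basic neighbourhood rather than merely containing one, and which justifies $int_\tau cl_\tau\{x:|f_i(x)|<r_i\}=\{x:|f_i(x)|<r_i\}$ only by ``continuity'' where your appeal to the openness of nonzero continuous linear functionals is the actual reason. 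Where you genuinely diverge is the other inclusion: you prove $\delta$-$E^{*}\subseteq\delta$-$(E,\sigma(E,E^{*}))^{*}$ by rerunning the Theorem \ref{th2} computation inside $\sigma(E,E^{*})$, so that both $\delta$-duals are identified with $\delta$-$E^{*}$ and the theorem drops out of Theorem \ref{th3}; the paper instead proves $\delta$-$(E,\sigma(E,\delta$-$E^{*}))^{*}\subseteq\delta$-$(E,\sigma(E,E^{*}))^{*}$ directly, showing that a regular open set of $\sigma(E,\delta$-$E^{*})$ is already regular open in $\sigma(E,E^{*})$ by working with the interior and closure operators of $\sigma(E,E^{*})$ and the identity $(E,\sigma(E,E^{*}))^{*}=E^{*}$. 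Your factorization is the cleaner one: it establishes the whole chain of Remark \ref{r1} in a single pass, reuses Theorem \ref{th3} symmetrically, and isolates the one genuinely new fact needed (that sublevel sets of merely continuous functionals are $\tau$-regular open); the paper's route avoids repeating the Theorem \ref{th2} mechanism for a second topology but pays for it by doing the regular-open bookkeeping twice, once in $\tau$ and once in $\sigma(E,E^{*})$.
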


\begin{proof}
From Theorem \ref{n2}, we have $\sigma (E, \delta$-$E^{*})\subseteq\sigma (E, E^{*})\subseteq\tau$. Let $f\in \delta$-$(E, \sigma (E, E^{*}))^{*}$ and $U$ be any open neighbourhood of $f(\theta)$. Then there exists a regular open set $V$ in $\sigma (E, E^{*})$ such that $\theta\in V\subseteq f^{-1}(U)$. Since $V\in\sigma (E, E^{*})$, there exist continuous functionals $f_1, f_2, \cdots, f_n$ on $(E, \tau)$ such that $V=\cap_{i=1}^n\{x\in E: |f_i(x)|<r_i\}$ where $r_i>0$ for $i=1, 2, \cdots, n$. Obviously $V\in\tau$. So, $V\subseteq int_\tau cl_\tau V$. Again $int_\tau cl_\tau V=int_\tau cl_\tau [\cap_{i=1}^n\{x\in E: |f_i(x)|<r_i\}]\subseteq int_\tau  [\cap_{i=1}^n cl_\tau\{x\in E: |f_i(x)|<r_i\}]\subseteq   \cap_{i=1}^n int_\tau [cl_\tau\{x\in E: |f_i(x)|<r_i\}]=\cap_{i=1}^n \{x\in E: |f_i(x)|<r_i\}=V$ as each $f_i$ is continuous on $(E, \tau)$. So, there exists a regular open set $V$ on $(E, \tau)$ such that $\theta\in V\subseteq f^{-1}(U)$. Again since $E$ is a linear topological space and $f$ is linear, $f\in \delta$-$E^{*}$. Again by Theorem \ref{th3} we have  $\delta$-$(E, \sigma (E, \delta$-$E^{*}))^{*}= \delta$-$E^{*}$. So, $f\in \delta$-$(E, \sigma (E, \delta$-$E^{*}))^{*}$. Thus $\delta$-$(E, \sigma (E, E^{*}))^{*}\subseteq \delta$-$(E, \sigma (E, \delta$-$E^{*}))^{*}$.\\
Let $f\in\delta$-$(E, \sigma (E, \delta$-$E^{*}))^{*}$ and $U$ be any open neighbourhood of $f(\theta)$. Then there exists a regular open set $V$ in $\sigma (E, \delta$-$E^{*})$ such that $\theta\in V\subseteq f^{-1}(U)$. Since $V\in\sigma (E, \delta$-$E^{*})$, $V\in\sigma(E, E^{*})$ as 
$\sigma (E, \delta$-$E^{*})\subseteq \sigma(E, E^{*})$ by Theorem \ref{th1}. So, there exist continuous functionals $f_1, f_2, \cdots, f_n$ on $(E, \tau)$ such that $V=\cap_{i=1}^n\{x\in E: |f_i(x)|<r_i\}$ where $r_i>0$ for $i=1, 2, \cdots, n$. Since $(E, \sigma (E, E^{*}))^{*}= E^{*}$, each $f_i\in (E, \sigma (E, E^{*}))^{*}$. So, $\{x\in E: |f_i(x)|<r_i\}$ is regular open set on $(E, \sigma (E, E^{*}))$, that is, $int^\sigma cl^\sigma\{x\in E: |f_i(x)|<r_i\}=\{x\in E: |f_i(x)|<r_i\}$ for all $i=1, 2, \cdots, n$  where   $int^\sigma$ and $cl^\sigma$ are denoted respectively by interior and closure operators on $(E, \sigma (E, E^{*}))$. Since $V\in\sigma (E, E^{*})$, $V\subseteq int^\sigma cl^\sigma V$. Now  $int^\sigma cl^\sigma V= int^\sigma cl^\sigma [\cap_{i=1}^n\{x\in E: |f_i(x)|<r_i\}]\subseteq int^\sigma  [\cap_{i=1}^ncl^\sigma\{x\in E: |f_i(x)|<r_i\}]\subseteq  \cap_{i=1}^n int^\sigma cl^\sigma\{x\in E: |f_i(x)|<r_i\}=\cap_{i=1}^n \{x\in E: |f_i(x)|<r_i\}=V$. 
So for any open neighbourhood $U$  of $f(\theta)$, there exists a regular open set $V$ on $(E,\sigma (E, E^{*}))$ such that $\theta\in V\subseteq f^{-1}(U)$. Again since $(E,\sigma (E, E^{*}))$ is a locally convex space and $f$ is linear, $f\in \delta$-$(E, \sigma(E, E^{*}))^{*}$. Thus  $\delta$-$(E, \sigma (E, \delta$-$E^{*}))^{*}\subseteq \delta$-$(E, \sigma(E, E^{*}))^{*}$. Hence $\delta$-$(E, \sigma (E, \delta$-$E^{*}))^{*}= \delta$-$(E, \sigma(E, E^{*}))^{*}$.
\end{proof}

\begin{remark}\label{r1}
Let $(E, \tau)$ be a linear topological space. Then\\ $\delta$-$(E, \sigma (E, \delta$-$E^{*}))^{*}= \delta$-$(E, \sigma(E, E^{*}))^{*}=\delta$-$E^{*}$
\end{remark}

\begin{theorem}\label{th4}
Let $(E,\tau)$ and $(F, \tau_1)$ be two linear topological spaces over the field $K$ and $T: E\rightarrow F$ be linear. If $T: (E, \tau)\rightarrow (F, \tau_1)$ is $\delta$-continuous then $T: (E, \sigma(E, \delta$-$E^{*}))\rightarrow (F, \sigma(F, F^{*}))$ is $\delta$-continuous. 
\end{theorem}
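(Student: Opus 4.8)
The plan is to argue directly from the definition that a map is $\delta$-continuous exactly when the preimage of every open set is $\delta$-open, and to reduce the verification to a subbase. Since the target carries the weak topology $\sigma(F, F^{*})$, which is the initial (coarsest) topology making every $g\in F^{*}$ continuous, a subbase for it is given by the sets $g^{-1}(U)$ with $g\in F^{*}$ and $U\subseteq K$ open. Because the $\delta$-open sets of any space are stable under finite intersections and under arbitrary unions (the two stability theorems recalled in Section 2), and because preimages convert the finite intersections and arbitrary unions that build a general open set of $F$ into the same operations in $E$, it will suffice to show that $T^{-1}\!\left(g^{-1}(U)\right)$ is $\delta$-open in $(E, \sigma(E, \delta$-$E^{*}))$ for each such subbasic set.

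The crux is the observation that for a fixed $g\in F^{*}$ the composite $g\circ T$ lies in $\delta$-$E^{*}$. Indeed, $g$ is continuous on $(F,\tau_1)$, so $g^{-1}(U)$ is $\tau_1$-open for every open $U\subseteq K$; since $T:(E,\tau)\rightarrow(F,\tau_1)$ is $\delta$-continuous by hypothesis, its preimage $(g\circ T)^{-1}(U)=T^{-1}\!\left(g^{-1}(U)\right)$ is $\delta$-open in $(E,\tau)$. As $g\circ T$ is also linear (a composition of linear maps), it is a $\delta$-continuous linear functional on $(E,\tau)$, i.e.\ $g\circ T\in\delta$-$E^{*}$.

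I would then invoke Theorem \ref{th2}, which asserts that every member of $\delta$-$E^{*}$ is $\delta$-continuous on $(E, \sigma(E, \delta$-$E^{*}))$. Applying this to $g\circ T$ shows that $(g\circ T)^{-1}(U)$ is $\delta$-open in $(E, \sigma(E, \delta$-$E^{*}))$, and since $T^{-1}\!\left(g^{-1}(U)\right)=(g\circ T)^{-1}(U)$, the preimage of each subbasic open set of $(F,\sigma(F,F^{*}))$ is $\delta$-open. The reduction of the first paragraph then upgrades this to arbitrary $\sigma(F,F^{*})$-open sets $W$: writing $W$ as a union of finite intersections of subbasic sets and pulling back, $T^{-1}(W)$ is a union of finite intersections of $\delta$-open sets, hence $\delta$-open. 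This is precisely $\delta$-continuity of $T:(E,\sigma(E,\delta$-$E^{*}))\rightarrow(F,\sigma(F,F^{*}))$.

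The composition step is the conceptual heart but is short; the point I would check most carefully is the legitimacy of reducing to a subbase, which rests entirely on the stability of $\delta$-open sets under arbitrary unions and finite intersections together with the identification of $\{g^{-1}(U):g\in F^{*},\,U\text{ open}\}$ as a subbase for $\sigma(F,F^{*})$. Beyond these elementary facts and Theorem \ref{th2}, no new topological input is required, so I do not anticipate a serious obstacle.
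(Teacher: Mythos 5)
Your proposal is correct and follows essentially the same route as the paper: both proofs hinge on the observation that $g\circ T$ is a linear map whose preimages of open sets are $\delta$-open in $(E,\tau)$, hence $g\circ T\in\delta$-$E^{*}$ for each $g\in F^{*}$. If anything, you are slightly more complete than the paper, which stops at showing $T^{-1}(V)\in\sigma(E,\delta$-$E^{*})$ for a basic neighbourhood $V$ and leaves implicit the final appeal to Theorem \ref{th2} (and to the stability of $\delta$-open sets under finite intersections and arbitrary unions) needed to conclude that these preimages are actually $\delta$-open rather than merely open.
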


\begin{proof}
Let $V$ be any basic open neighbourhood of $\theta$ on $(F, \sigma(F, F^{*}))$. Then $V=\cap_{i=1}^n\{y\in F: |g_i(y)|<r_i\}$ where $r_i>0$ and $g_i\in F^{*}$ for $i=1, 2, \cdots, n$. Now $T^{-1}(V)=T^{-1}[\cap_{i=1}^n\{y\in F: |g_i(y)|<r_i\}]=\cap_{i=1}^n T^{-1}\{y\in F: |g_i(y)|<r_i\}=\cap_{i=1}^n \{x\in E: |g_i(T(x))|<r_i\}$. To show that $T^{-1}(V)\in \sigma(E, \delta$-$E^{*})$, it is enough to show that $g_i\circ T\in \delta$-$E^{*}$ for $i\in\{1, 2, \cdots, n\}$. It is easy to see that each $g_i\circ T$ is a linear functional on $E$.
Let $i\in\{1, 2, \cdots, n\}$ and $U$ be any open set in $K$. Now  $(g_i\circ T)^{-1}(U)= T^{-1}(g_i^{-1}(U))$. Since $g_i\in F^{*}$,  $g_i^{-1}(U)$ is an open set on $(F, \tau_1)$. So, $T^{-1}(g_i^{-1}(U))$ is a delta open set on $(E, \tau)$. Thus   $g_i\circ T\in \delta$-$E^{*}$ for $i\in\{1, 2, \cdots, n\}$. This completes the proof.
\end{proof}

\begin{corollary}
Let $(E,\tau)$ and $(F, \tau_1)$ be two linear topological spaces over the field $K$ and $T: E\rightarrow F$ be linear. If $T: (E, \tau)\rightarrow (F, \tau_1)$ is $\delta$-continuous then $T: (E, \sigma(E, \delta$-$E^{*}))\rightarrow (F, \sigma(F, \delta$-$F^{*}))$ is $\delta$-continuous. 
\end{corollary}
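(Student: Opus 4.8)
The plan is to deduce this corollary directly from Theorem \ref{th4} by exploiting the fact that the delta weak topology on $F$ is coarser than the weak topology on $F$.

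First I would apply Theorem \ref{th1} to the space $(F, \tau_1)$, obtaining the inclusion $\sigma(F, \delta$-$F^{*})\subseteq\sigma(F, F^{*})$. Consequently, every $\sigma(F, \delta$-$F^{*})$-open subset of $F$ is automatically $\sigma(F, F^{*})$-open. Next I would invoke Theorem \ref{th4}, which asserts that $T:(E, \sigma(E, \delta$-$E^{*}))\rightarrow(F, \sigma(F, F^{*}))$ is $\delta$-continuous. By the definition of $\delta$-continuity, this means that for every $\sigma(F, F^{*})$-open set $W$, the pre-image $T^{-1}(W)$ is $\delta$-open in $(E, \sigma(E, \delta$-$E^{*}))$.

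Finally I would combine the two observations. Let $V$ be an arbitrary open set in $(F, \sigma(F, \delta$-$F^{*}))$. By the first step $V$ is also $\sigma(F, F^{*})$-open, and hence by the second step $T^{-1}(V)$ is $\delta$-open in $(E, \sigma(E, \delta$-$E^{*}))$. Since $V$ was an arbitrary $\sigma(F, \delta$-$F^{*})$-open set, it follows that $T:(E, \sigma(E, \delta$-$E^{*}))\rightarrow(F, \sigma(F, \delta$-$F^{*}))$ is $\delta$-continuous, which is precisely the claim.

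I do not expect any genuine obstacle here, as the corollary is an almost immediate consequence of Theorem \ref{th4}. The single conceptual point worth stressing is the direction of the inclusion supplied by Theorem \ref{th1}: passing to the coarser target topology $\sigma(F, \delta$-$F^{*})$ shrinks the collection of open sets whose pre-images must be verified, so the $\delta$-continuity requirement for the coarser target is a \emph{weaker} condition than the one Theorem \ref{th4} has already established for the finer target $\sigma(F, F^{*})$.
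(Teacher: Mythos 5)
Your argument is correct and is essentially the paper's own proof: the paper simply states that the corollary ``follows from Theorem \ref{th4},'' and the mechanism you supply --- that $\sigma(F, \delta$-$F^{*})\subseteq\sigma(F, F^{*})$ by Theorem \ref{th1}, so $\delta$-continuity into the coarser target topology is a weaker requirement already guaranteed by Theorem \ref{th4} --- is exactly the intended (and correct) way to fill in that one-line justification.
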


\begin{proof}
Follows from the Theorem \ref{th4}.
\end{proof}

\begin{definition}
 A Subset $B$ of a linear topological space $(E, \tau)$ is said to be $\delta$- bounded if for every $\delta$-open neighbourhood $U$ of $\theta$ there exists $\lambda_0>0$ such that $\lambda B\subseteq U$ for all $|\lambda|\leq \lambda_0$.
\end{definition}

\begin{definition}
Let $(E,\tau)$ and $(F, \tau_1)$ be two linear topological spaces over the field $K$. A linear mapping $T: (E, \tau)\rightarrow (F, \tau_1)$ is said to be $\delta$- bounded if $T$ maps every bounded set of $E$ to a $\delta$-bounded set of $F$.
\end{definition}

\begin{theorem}
Let $(E,\tau)$  be a linear topological space over the field $K$. A set $B\subseteq E$ is $\sigma(E, \delta$-$E^{*})$ $\delta$-bounded iff for each $f\in \delta$-$E^{*}$, $\sup_{x\in B}|f(x)|<\infty$.
\end{theorem}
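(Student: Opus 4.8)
The plan is to prove the two implications separately, using throughout that by Theorem \ref{th2} every $f\in\delta$-$E^{*}$ is $\delta$-continuous on $(E,\sigma(E,\delta$-$E^{*}))$. Consequently, for each such $f$ and each $r>0$ the set $\{x\in E:|f(x)|<r\}$, being the preimage under $f$ of the open ball $\{t\in K:|t|<r\}$, is a $\delta$-open (in fact, as the proof of Theorem \ref{th2} shows, regular open) neighbourhood of $\theta$ in the $\delta$-weak topology. This observation is the common tool for both directions.

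For the necessity I would fix $f\in\delta$-$E^{*}$ and test the hypothesis against the single $\delta$-open neighbourhood $U_f=\{x\in E:|f(x)|<1\}$ of $\theta$. The definition of $\sigma(E,\delta$-$E^{*})$ $\delta$-boundedness then supplies $\lambda_0>0$ with $\lambda_0 B\subseteq U_f$; unwinding the inclusion gives $\lambda_0|f(x)|=|f(\lambda_0 x)|<1$, hence $|f(x)|<1/\lambda_0$ for every $x\in B$, so that $\sup_{x\in B}|f(x)|\le 1/\lambda_0<\infty$.

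For the sufficiency I would assume $\sup_{x\in B}|f(x)|<\infty$ for every $f\in\delta$-$E^{*}$ and let $U$ be an arbitrary $\delta$-open neighbourhood of $\theta$. By the definition of a $\delta$-open set there is a regular open $P$ with $\theta\in P\subseteq U$, and since $P$ is open in $\sigma(E,\delta$-$E^{*})$ it contains a basic neighbourhood $W=\cap_{i=1}^n\{x\in E:|f_i(x)|<r_i\}$ of $\theta$, with $f_i\in\delta$-$E^{*}$ and $r_i>0$. Setting $M_i=\sup_{x\in B}|f_i(x)|<\infty$, I would choose $\lambda_0=\tfrac12\min\{r_i/M_i:M_i>0\}$ (and $\lambda_0=1$ when every $M_i=0$), so that $|f_i(\lambda x)|=|\lambda|\,|f_i(x)|\le\lambda_0 M_i<r_i$ for all $x\in B$, all $i\in\{1,\dots,n\}$, and all $|\lambda|\le\lambda_0$. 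This yields $\lambda B\subseteq W\subseteq U$ for all $|\lambda|\le\lambda_0$, so $B$ is $\delta$-bounded.

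The decisive point is the reduction performed in the sufficiency direction: the hypothesis furnishes a finiteness bound one functional at a time, so a general $\delta$-open neighbourhood of $\theta$ must first be shrunk to a finite intersection of subbasic seminorm balls before a single scaling factor can be produced. Once the neighbourhood involves only the finitely many functionals $f_1,\dots,f_n$, the required uniform $\lambda_0$ is obtained simply as a minimum over the finite index set; the passage from $\delta$-open to regular open to basic open, legitimate by the definition of a $\delta$-open set and the fact that regular open sets are open, is exactly what secures this finiteness and hence the uniform scalar.
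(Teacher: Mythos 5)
Your proof is correct and follows essentially the same route as the paper's: for necessity you test $B$ against the single subbasic neighbourhood $\{x\in E:|f(x)|<1\}$, and for sufficiency you obtain the uniform scalar as a minimum over the finitely many functionals defining a basic neighbourhood. The one difference is that you explicitly reduce an arbitrary $\delta$-open neighbourhood of $\theta$ to a basic seminorm ball via an intermediate regular open set, a step the paper leaves implicit (its converse argument starts directly from a basic neighbourhood), so your version is if anything slightly more complete.
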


\begin{proof}
Let $B\subseteq E$ be a $\sigma(E, \delta$-$E^{*})$ $\delta$-bounded and $f\in \delta$-$E^{*}$. Then by the Remark \ref{r1}, $f\in\delta$-$(E, \sigma (E, \delta$-$E^{*}))^{*}$. So, $V(f,\, 1)=\{x\in E: |f(x)|<1\}$ is a  $\delta$-open neighbourhood of $\theta$ on $(E, \sigma(E, \delta$-$E^{*}))$. Since $B\subseteq E$ is $\sigma(E, \delta$-$E^{*})$ $\delta$-bounded, there exists $\lambda_0>0$ such that $\lambda_0 B\subseteq V(f,\, 1)$. So, $|f(\lambda_0\, x)|<1$ for all $x\in B$, that is, $|f( x)|<\frac{1}{\lambda_0}$ for all $x\in B$, that is, $\sup_{x\in B}|f(x)|\leq \frac{1}{\lambda_0}$.\\
Conversely, suppose the given condition is satisfied. Let $V$ be a basic $\delta$-open neighbourhood of $\theta$ in $\sigma(E, \delta$-$E^{*})$. Then there exist $f_i\in \delta$-$E^{*}$ for $i=1, 2, \cdots, n$ such that $V=\cap_{i=1}^n\{x\in E: |f_i(x)|<r_i\}$ where $r_i>0$ for $i=1, 2, \cdots, n$. Let  $\sup_{x\in B}|f_i(x)|<k_i$ for $i=1, 2, \cdots, n$. Let $\lambda_0>0$ be chosen so that $k_i\lambda_0<r_i$  for $i=1, 2, \cdots, n$. Then for $|\lambda|\leq\lambda_0$ and $x\in B$, $|f_i(\lambda x)|=|\lambda||f_i(x)|\leq\lambda_0k_i<r_i$ for all $i=1, 2, \cdots, n$. So, $\lambda x\in V$ for all $|\lambda|\leq\lambda_0$ and $x\in B$. So, $B$ is $\sigma(E, \delta$-$E^{*})$ $\delta$-bounded set.
\end{proof}

\begin{theorem}\label{th6}
Let $A$ and $B$ are non-void convex subsets of a linear topological space $E$ over $K$ and $A\cap B=\phi$. If $A$ is $\delta$-open then there exists $f\in\delta$-$E^{*}$ and $r\in \mathbb{R}$ such that $Re (f(x))<r\leq Re f(y)$ for all $x\in A$ and for all $y\in B$.
\end{theorem}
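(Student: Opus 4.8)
The plan is to adapt the classical geometric Hahn--Banach separation scheme, with one genuinely new ingredient: upgrading the \emph{ordinary} continuity that Hahn--Banach delivers to full $\delta$-continuity, using that $A$ is $\delta$-open. First I would reduce to a convex body around $\theta$ by passing to the difference set $C=A-B=\bigcup_{b\in B}(A-b)$. Since every translation of $(E,\tau)$ is a homeomorphism and homeomorphisms preserve regular open (hence $\delta$-open) sets, each $A-b$ is $\delta$-open, so $C$ is $\delta$-open by the theorem on arbitrary unions of $\delta$-open sets; it is convex as a difference of convex sets, and $A\cap B=\phi$ forces $\theta\notin C$. Fixing any $c_0\in C$ and putting $V=C-c_0$ and $z_0=-c_0$, I obtain a convex $\delta$-open neighbourhood $V$ of $\theta$ (in particular $\tau$-open) with $z_0\notin V$.

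Next, for $K=\mathbb{R}$, let $p=p_V$ be the Minkowski functional of $V$. By the Minkowski-functional theorem $p$ is positively homogeneous and subadditive, and since $V$ is open convex, $f\le p<1$ on $V$ while $z_0\notin V$ gives $p(z_0)\ge 1$. Defining $f_0(tz_0)=t$ on $M=\mathbb{R}z_0$, a one-line check yields $f_0\le p$ on $M$, so the Hahn--Banach theorem extends $f_0$ to a real-linear $f$ on $E$ with $f\le p$ everywhere and $f(z_0)=1$ (hence $f\neq 0$).

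The main obstacle is that Hahn--Banach produces only a $p$-dominated functional, i.e.\ one that is continuous but not obviously $\delta$-continuous. I would resolve this with a lemma: \emph{if a linear functional $g$ satisfies $|g|<M$ on some $\delta$-open neighbourhood $W$ of $\theta$, then $g\in\delta\text{-}E^{*}$.} To prove it, take $x_0\in g^{-1}(U)$ for $U$ open in $K$, choose $\varepsilon>0$ with the $\varepsilon$-ball about $g(x_0)$ inside $U$, and observe $x_0+\tfrac{\varepsilon}{M}W\subseteq g^{-1}(U)$; since scalings and translations are homeomorphisms, $x_0+\tfrac{\varepsilon}{M}W$ is $\delta$-open and contains $x_0$, so it contains a regular open neighbourhood of $x_0$ inside $g^{-1}(U)$, proving $g^{-1}(U)$ is $\delta$-open. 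I then apply the lemma with $W=V\cap(-V)$, which is $\delta$-open by the finite-intersection theorem (as $-V$ is $\delta$-open) and on which $|f|<1$, since $f\le p<1$ on $V$ and $f(x)=-f(-x)>-1$ on $-V$. This gives $f\in\delta\text{-}E^{*}$ precisely because the neighbourhood witnessing boundedness is $\delta$-open, which is exactly what the hypothesis ``$A$ is $\delta$-open'' buys us.

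Finally, for $a\in A$, $b\in B$ one has $a-b-c_0\in V$, so $f(a-b-c_0)<1$; using $f(c_0)=-f(z_0)=-1$ this collapses to $f(a)<f(b)$ for all such $a,b$. Setting $r=\sup_{a\in A}f(a)$ yields $f(a)\le r\le f(b)$, and since $f$ is a nonzero continuous linear functional it is an open map, so $f(A)$ is open in $\mathbb{R}$ and cannot attain its supremum; hence $f(a)<r$ for every $a\in A$, giving $\operatorname{Re}f(x)=f(x)<r\le f(y)=\operatorname{Re}f(y)$. For $K=\mathbb{C}$ I would run the real argument on the underlying real space to get a $\delta$-continuous real-linear $u$ with $u(a)<r\le u(b)$ and then set $f(x)=u(x)-iu(ix)$, which is complex-linear with $\operatorname{Re}f=u$; its $\delta$-continuity follows once more from the lemma, bounding $|f|$ on the $\delta$-open neighbourhood $W\cap(-iW)$ on which both $|u(x)|$ and $|u(ix)|$ are controlled. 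The expected bottleneck is entirely the $\delta$-continuity lemma; the separation and strictness are the standard scheme.
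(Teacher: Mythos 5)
Your proposal is correct and follows essentially the same route as the paper: form the translated difference set, apply the Minkowski functional and Hahn--Banach, deduce $\delta$-continuity from boundedness of $f$ on the $\delta$-open symmetric neighbourhood $V\cap(-V)$, and get strictness from openness of $f(A)$. Your boundedness-implies-$\delta$-continuity lemma is exactly the paper's inline argument (with $W=\tfrac{r}{2}V$), and your extra care with the $\delta$-openness of $A-B$ and with the complex case only tightens steps the paper asserts without detail.
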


\begin{proof}
It is suffices to prove this theorem for real scalars. For, if the scalar field be $\mathbb{C}$ and $f_0$ be a $\delta$-continuous real linear functional that gives the required inequalities then $f(x)=f_0(x)-if_0(ix)$ is the required unique $\delta$-continuous complex linear functional of the theorem with $Re f=f_0$. Thus we may take $K=\mathbb{R}$.\\
Let $a_0\in A$ and $b_0\in B$ and $x_0=b_0-a_0$. Then $x_0\neq\theta$. Let $C=x_0+(A-B)$. Obviously, $\theta\in C$. Also $C$ is $\delta$-open and convex. As $C$ is open and $\theta\in C$,  the Minkowski functional $p$ of $C$ can be defined. Then $p(x+y)\leq p(x)+p(y)$ and $p(tx)=t.p(x)$ for any $x, y\in E$ and $t\geq 0$. Since $A\cap B=\phi$, $\theta\notin A- B$ and hence $x_0\notin C$. Consequently, $p(x_0)\geq 1$.\\
Let $M=\{tx_0: t\in \mathbb{R}\}$ be the one dimensional subspace spanned by $x_0$. Let $f_0:M\rightarrow K$ be a linear functional defined by $f_0(tx_0)=t$.\\
If $t\geq 0$, $f_0(t x_0)=t\leq t p(x_0)=p(t x_0)$.\\
If $t< 0$, $f_0(t x_0)=t< 0\leq p(t x_0).$ Thus $f_0(y)\leq p(y)$ for all $y\in M$. Then by Hahn Banach theorem there exists a linear functional $f$ on $E$ such that $f(x)=f_0(x)$ for $x\in M$ and $-p(-x)\leq f(x)\leq p(x)$ for all $x\in E$. Now since $C\subseteq \{x\in E: p(x)\leq 1\}$, $p(x)\leq 1$ for all $x\in C$ and hence $f(x)\leq 1$  for all $x\in C$. Again for $x\in -C$, $-x\in C$ and hence $f(-x)\leq 1$, that is $f(x)\geq -1$. Thus $|f(x)|\leq 1$ for all $x\in C\cap (-C)$. Now since $C$ is $\delta$-open, $-C$ is $\delta$-open and hence  $C\cap (-C)$ is $\delta$-open neighbourhood of $\theta$. So $f$ is bounded in a $\delta$-open neighbourhood of $\theta$.\\ 
We now show that $f$ is $\delta$-continuous. Let $B(0, \epsilon)=\{\lambda\in K: |\lambda|<\epsilon\}$ be any neighbourhood of $0$ in $K$. Let $a\in f^{-1}(B(0, \epsilon)).$ Then $|f(a)|< \epsilon$. So there exists $r> 0$ such that $|f(a)|+ r< \epsilon$.    Let $W=\frac{r}{2} V$ where $V=C\cap (-C).$ Then $W$ is also $\delta$-open neighbourhood of $\theta$ and for $\frac{r}{2}x\in W$, $|f(\frac{r}{2} x)|=\frac{r}{2} |f(x)|\leq \frac{r}{2}< r$. So, $a\in a+W\subseteq f^{-1}(B(0, \epsilon))$. Thus $f^{-1}(B(0, \epsilon))$ is a $\delta$-open neighbourhood of $a$ and hence $f$ is $\delta$-continuous.\\
Now let $a\in A$ and $b\in B$. Then $f(a)-f(b)+1=f(a)-f(b)+f(x_0)= f(a-b+x_0)\leq p(a-b+x_0)\leq 1$ as $a-b+x_0\in C$. So, $f(a)\leq f(b)$ for any $a\in A$ and $b\in B$. Also Since $f$ is linear, $f(A)$ and $f(B)$ are convex subsets of $\mathbb{R}$, that is, intervals in $\mathbb{R}$ with $f(A)$ to the left of $f(B)$. Again since any linear functional is open,  $f(A)$ is open. So there exists $r>0$ such that $f(x)<r\leq f(y)$ for $x\in A$ and $y\in B$.
\end{proof}

\begin{definition}
Let $(E, \tau)$ be a topological space and $A\subseteq E$. The delta closure of $A$ is denoted by $\delta$-$cl(A)$ and is defined by\\
$\delta$-$cl(A)=\{x\in E:\,$ every delta neighbourhood of $x$ intersects $A\}$.  
\end{definition}

\begin{note}
Let $(E, \tau)$ be a topological space and $A\subseteq E$. Then $cl(A)\subseteq \delta$-$cl(A)$.
\end{note}

\begin{theorem}
Let $(E, \tau)$ be a topological space and $A\subseteq E$. Then $\delta$-$cl(A)$ is $\delta$-closed. 
\end{theorem}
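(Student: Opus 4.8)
The plan is to prove that the complement $E\setminus\delta\text{-}cl(A)$ is $\delta$-open; since a set is $\delta$-closed precisely when its complement is $\delta$-open, this immediately yields the claim. By the Note characterizing $\delta$-open sets (each point has a regular open set around it lying inside), it suffices to produce, for each point $x\in E\setminus\delta\text{-}cl(A)$, a regular open set $P$ with $x\in P\subseteq E\setminus\delta\text{-}cl(A)$.

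First I would fix $x\notin\delta\text{-}cl(A)$ and unwind the definition of the delta closure: there is a delta neighbourhood of $x$ that misses $A$. Because every delta neighbourhood of $x$ contains a regular open set about $x$ (this is exactly the defining property of $\delta$-open sets recalled in the Note), I may replace it by a regular open set $P$ with $x\in P$ and $P\cap A=\phi$.

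The crux of the argument is the observation that this single regular open set $P$ witnesses simultaneously that \emph{every} point of $P$ lies outside $\delta\text{-}cl(A)$. Indeed, for an arbitrary $y\in P$, the set $P$ is itself a delta neighbourhood of $y$, since $P$ is regular open (hence $\delta$-open) and contains $y$; as $P\cap A=\phi$, the point $y$ has a delta neighbourhood missing $A$, so $y\notin\delta\text{-}cl(A)$. Thus $P\subseteq E\setminus\delta\text{-}cl(A)$, which is precisely the regular open witness required, and the complement is $\delta$-open.

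I expect the only delicate point to be the passage from an arbitrary delta neighbourhood to a regular open witness in the second step; once the notion of delta neighbourhood is read through the defining property of $\delta$-open sets, the remainder is a straightforward propagation argument requiring essentially no computation.
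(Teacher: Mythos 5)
Your proposal is correct and follows essentially the same route as the paper: both show the complement of $\delta$-$cl(A)$ is $\delta$-open by taking, for each $x$ outside the closure, a delta neighbourhood missing $A$ and observing that this neighbourhood then misses all of $\delta$-$cl(A)$. The only cosmetic difference is that you refine the witness down to a regular open set $P$, whereas the paper stops at the $\delta$-open set $U$ and leaves that last reduction implicit.
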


\begin{proof}
Let $x\notin \delta$-$cl(A)$. Then there exists a $\delta$-open neighbourhood $U$ of $x$ such  that $U\cap A=\phi$. Now if possible let $y\in U\cap \delta$-$cl(A)$ then $y\in U$ and $y\in \delta$-$cl(A)$ which implies that $U\cap A\neq\phi$, a contradiction. Thus $U\cap \delta$-$cl(A)=\phi$. Then $x\in U\subseteq E\setminus\delta$-$cl(A)$. Hence $E\setminus\delta$-$cl(A)$ is $\delta$-open, that is, $\delta$-$cl(A)$ is $\delta$-closed set.
\end{proof}

\begin{theorem}\label{th7}
Let $A$ and $B$ are non-void convex subsets of a locally convex space $E$ over $K$ and $A\cap B=\phi$. Then there exists $f\in\delta$-$E^{*}$ and real numbers $r_1$ and $r_2$ with $r_1< r_2$ such that $Re f(x)<r_1<r_2< Re f(y)$ for all $x\in A$ and for all $y\in B$ if and only if $\theta\notin \delta$-$cl(A- B)$.
\end{theorem}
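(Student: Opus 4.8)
The plan is to set $C = A - B$ and reduce the whole statement to a question about separating the single set $C$ from $\theta$ by a $\delta$-open neighbourhood. First I would record the two reductions that make this work: $C$ is a non-void convex set, being the sum of the convex sets $A$ and $-B$, and the hypothesis $A\cap B=\phi$ is exactly the statement $\theta\notin C$. With $C$ in hand the desired equivalence reads: a strict $\delta$-continuous separation of $A$ and $B$ exists if and only if some $\delta$-open neighbourhood of $\theta$ misses $C$, which is precisely $\theta\notin\delta$-$cl(C)$.

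For the forward implication, suppose $f\in\delta$-$E^{*}$ and $r_1<r_2$ satisfy $Re f(x)<r_1<r_2<Re f(y)$ for $x\in A$, $y\in B$. Put $s=r_2-r_1>0$. Subtracting the two inequalities gives $Re f(z)<-s$ for every $z=x-y\in C$. I would then consider the open set $H=\{\lambda\in K: Re\,\lambda>-s\}$ in $K$: since $f$ is $\delta$-continuous, $f^{-1}(H)$ is $\delta$-open; it contains $\theta$ because $f(\theta)=0$ and $0>-s$; and it is disjoint from $C$ because every $z\in C$ has $Re f(z)<-s$, so $f(z)\notin H$. Hence $f^{-1}(H)$ is a $\delta$-open neighbourhood of $\theta$ avoiding $C$, giving $\theta\notin\delta$-$cl(C)$. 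This direction uses neither convexity nor local convexity, only $\delta$-continuity of $f$.

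For the converse I would start from a $\delta$-open neighbourhood $U$ of $\theta$ with $U\cap C=\phi$, choose by the definition of $\delta$-open a regular open set $P$ with $\theta\in P\subseteq U$, and then invoke local convexity to pick a convex open neighbourhood $V$ of $\theta$ with $V\subseteq P$. The pivotal construction is $V_\delta:=int_\tau cl_\tau V$: by the earlier Result this is regular open, hence $\delta$-open; it is convex since the closure of a convex set is convex and, by the interior theorem, so is its interior; it contains $\theta$; and, crucially, because $P$ is regular open and $V\subseteq P$ we get $V_\delta=int_\tau cl_\tau V\subseteq int_\tau cl_\tau P=P\subseteq U$, so $V_\delta$ still misses $C$. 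Now $V_\delta$ and $C$ are disjoint non-void convex sets with $V_\delta$ $\delta$-open, so Theorem \ref{th6} yields $g\in\delta$-$E^{*}$ and $t\in\mathbb{R}$ with $Re g(u)<t\leq Re g(w)$ for all $u\in V_\delta$, $w\in C$; evaluating at $\theta\in V_\delta$ forces $t>0$. Writing $w=a-b$ gives $Re g(a)-Re g(b)\geq t$ for all $a\in A$, $b\in B$, so with $f=-g\in\delta$-$E^{*}$ one obtains $\sup_{x\in A}Re f(x)+t\leq\inf_{y\in B}Re f(y)$; taking $r_1=\sup_{x\in A}Re f(x)+t/3$ and $r_2=\inf_{y\in B}Re f(y)-t/3$ (so that $r_2-r_1\geq t/3>0$) produces the required strict inequalities $Re f(x)<r_1<r_2<Re f(y)$.

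I expect the main obstacle to be the converse, and within it the step of upgrading the given $\delta$-open but possibly non-convex neighbourhood $U$ into a convex $\delta$-open neighbourhood that still avoids $C$, since Theorem \ref{th6} requires one of the two sets to be simultaneously convex and $\delta$-open. This is exactly the point where local convexity is indispensable — it is not needed in Theorem \ref{th6} — and the device that rescues it is the observation that applying $int_\tau cl_\tau$ to a convex open set lying inside the regular open set $P$ cannot escape $P$. The remaining work, namely flipping the sign of $g$ so that $A$ falls on the left and widening the separating value into a genuine gap to secure strictness on both sides, is routine.
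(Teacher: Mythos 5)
Your proposal is correct and follows essentially the same route as the paper: both directions hinge on the reduction to $C=A-B$, the forward implication exhibits a $\delta$-open neighbourhood of $\theta$ missing $C$ as a preimage under $f$, and the converse applies Theorem \ref{th6} to a convex $\delta$-open neighbourhood of $\theta$ disjoint from the difference set and then widens the separating value into a strict gap. The only (harmless) divergence is how that convex $\delta$-open neighbourhood is manufactured: the paper takes the complement of $\delta$-$cl(A-B)$ and invokes Theorem \ref{th5} to place an absolutely convex closed neighbourhood $U$ inside it, working with $int[cl\,U]$, whereas you extract a regular open $P$ from the definition of $\delta$-openness, shrink to a convex open $V\subseteq P$ by local convexity, and use the monotonicity $int_\tau cl_\tau V\subseteq int_\tau cl_\tau P=P$ --- both devices are valid and serve the identical purpose.
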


\begin{proof}
It is suffices to prove this theorem for real scalars. For, if the scalar field be $\mathbb{C}$ and $f_0$ be a $\delta$-continuous real linear functional that gives the required inequalities then $f(x)=f_0(x)-if_0(ix)$ is the required unique $\delta$-continuous complex linear functional of the theorem with $Re f=f_0$. Thus we may take $K=\mathbb{R}$.\\
Suppose that there exists $f\in\delta$-$E^{*}$ and real numbers $r_1$ and $r_2$ with $r_1< r_2$ such that $ f(x)<r_1<r_2<  f(y)$ for all $x\in A$ and for all $y\in B$. If possible let $\theta\in \delta$-$cl(A- B)$. Then every delta neighbourhood of $\theta$ intersects $A - B$. Since $f\in\delta$-$E^{*}$, $\{x\in E: |f(x)|<r_2-r_1\}$ is a  delta neighbourhood of $\theta$. So, $\{x\in E: |f(x)|<r_2-r_1\}\cap (A-B)\neq \phi$. Therefore there exist $a\in A$ and $b\in B$ such that $|f(a-b)|<r_2-r_1$. Thus $|f(b)- f(a)|<r_2-r_1$ which contradicts our assumption.\\
Conversely, Let  $\theta\notin \delta$-$cl(A- B)$. Then $(\delta$-$cl(A- B))^c$ is an $\delta$-open neighbourhood of $\theta$. Since $E$ is locally convex, by Theorem \ref{th5} there exists an absolutely convex closed neighbourhood $U$ of $\theta$ such that $U\subseteq (\delta$-$cl(A- B))^c$. Then $int [cl \,U]\subseteq (\delta$-$cl(A- B))^c$ as $cl\, U=U$, that is, $int [cl\, U]\cap \delta$-$cl(A- B)=\phi$. Then $int [cl\, U]\cap (A- B)=\phi$. Let  $V= int [cl \,U]$. Then $V$ is regular open, that is, $V$ is $\delta$-open set and $V\cap (A-B)=\phi$ and also $V$ is convex. Then by Theorem \ref{th6}, there exists $f\in\delta$-$E^{*}$ and $r\in \mathbb{R}$ such that $f(v)<r\leq f(b)-f(a)$ for every $v\in V$, $a\in A$ and $b\in B$. Since $f$ is open and $V$ is an open convex neighbourhood of $\theta$, $f(V)$ is an open interval containing $0$ in $\mathbb{R}$. Let $\sup_{v\in V}f(v)=r_0$. Then $0<r_0\leq r$. Now $f(a)+r_0\leq f(a)+r\leq f(b)$ for all $a\in A$ and $b\in B$ implies that, $\sup_{a\in A}f(a)+r_0\leq f(b)$ for all  $b\in B$. Then  $\sup_{a\in A}f(a)+r_0\leq \inf_{b\in B}f(b)$. So for any $x\in A$ and $y\in B$ we have\\
$f(x)\leq \sup_{a\in A}f(a)< \sup_{a\in A}f(a)+\frac{1}{2}r_0=r_1< \sup_{a\in A}f(a)+\frac{2}{3}r_0=r_2< \sup_{a\in A}f(a)+r_0\leq\inf_{b\in B}f(b)\leq f(y)$, that is, $f(x)<r_1<r_2< f(y)$ for every $x\in A$ and $y\in B$. 
\end{proof}

\begin{theorem}
Let $\tau_1$ and $\tau_2$ be two locally convex topological spaces on a linear space $E$ over $K$ such that they have the same $\delta$-continuous linear functionals, that is, $\delta$-$(E, \tau_1)^{*}=\delta$-$(E, \tau_2)^{*}$. Then an absolutely convex set $A\subseteq E$ is $\tau_1$ $\delta$-closed = $\tau_2$ $\delta$-closed. 
\end{theorem}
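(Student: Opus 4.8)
The plan is to use the separation Theorem \ref{th7} to transfer $\delta$-closedness from one topology to the other through the common $\delta$-dual. First I note that, since the hypothesis is symmetric in $\tau_1$ and $\tau_2$, it suffices to prove that if $A$ is $\tau_1$ $\delta$-closed then $A$ is $\tau_2$ $\delta$-closed; interchanging the roles of $\tau_1$ and $\tau_2$ then yields the reverse implication. I also record the elementary characterisation I will use repeatedly: since $A\subseteq \delta$-$cl(A)$ always holds, a set $A$ is $\delta$-closed precisely when $\delta$-$cl(A)=A$, equivalently, precisely when for every point $x_0\notin A$ there is a $\delta$-open neighbourhood of $x_0$ disjoint from $A$. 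If $A=\phi$ the claim is trivial (the whole space is $\delta$-open in either topology), so I assume $A$ is non-void; being absolutely convex it is in particular convex.

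Now suppose $A$ is $\tau_1$ $\delta$-closed and fix $x_0\in E\setminus A$. Put $B=\{x_0\}$, which is non-void and convex with $A\cap B=\phi$. The first key step is to show that $\theta\notin \delta$-$cl_{\tau_1}(A-B)$, where $A-B=A-x_0$. Since $(E,\tau_1)$ is a linear topological space, translation by $x_0$ is a homeomorphism, so it preserves interiors and closures, hence regular open sets, hence $\delta$-open sets, and therefore $\delta$-neighbourhoods and the $\delta$-closure operator. Consequently $\theta\in \delta$-$cl_{\tau_1}(A-x_0)$ if and only if $x_0\in \delta$-$cl_{\tau_1}(A)$. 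As $A$ is $\tau_1$ $\delta$-closed we have $\delta$-$cl_{\tau_1}(A)=A$ and $x_0\notin A$, whence $\theta\notin \delta$-$cl_{\tau_1}(A-B)$.

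Applying Theorem \ref{th7} in the locally convex space $(E,\tau_1)$ to the disjoint non-void convex sets $A$ and $B=\{x_0\}$, I obtain $f\in\delta$-$(E,\tau_1)^{*}$ and real numbers $r_1<r_2$ with $Re f(a)<r_1<r_2<Re f(x_0)$ for all $a\in A$. By hypothesis $\delta$-$(E,\tau_1)^{*}=\delta$-$(E,\tau_2)^{*}$, so this same $f$ is $\delta$-continuous on $(E,\tau_2)$. As in Theorem \ref{th7} it suffices to argue with real scalars (in the complex case one passes to $f_0=Re f$, which is again $\delta$-continuous on $(E,\tau_2)$, being the composite of $f$ with the continuous map $Re$). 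Set $U=\{x\in E:\,Re f(x)>r_2\}$, which is the preimage under $f$ of the open set $(r_2,\infty)$; by $\delta$-continuity of $f$ on $(E,\tau_2)$ it is $\tau_2$ $\delta$-open. It contains $x_0$ since $Re f(x_0)>r_2$, and it is disjoint from $A$ since $Re f(a)<r_1<r_2$ for every $a\in A$. Thus $x_0$ has a $\tau_2$ $\delta$-open neighbourhood missing $A$, so $x_0\notin \delta$-$cl_{\tau_2}(A)$.

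Since $x_0\in E\setminus A$ was arbitrary, $\delta$-$cl_{\tau_2}(A)\subseteq A$, and together with the always-valid reverse inclusion this gives $\delta$-$cl_{\tau_2}(A)=A$; that is, $A$ is $\tau_2$ $\delta$-closed. By the symmetry noted at the outset the converse holds verbatim, completing the proof. I expect the only genuine subtlety to be the first step, namely reducing $\theta\notin \delta$-$cl_{\tau_1}(A-B)$ to $x_0\notin \delta$-$cl_{\tau_1}(A)=A$; this rests entirely on the translation invariance of the $\delta$-closure, and once it is secured, Theorem \ref{th7} delivers the separating functional while the shared $\delta$-dual transplants it into $\tau_2$.
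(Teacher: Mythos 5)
Your proof is correct and follows the same skeleton as the paper's: reduce to one direction by symmetry, take $a\notin A$, observe $\theta\notin\delta$-$cl_{\tau_1}(A-a)$, invoke Theorem \ref{th7} to get a separating $f\in\delta$-$(E,\tau_1)^{*}=\delta$-$(E,\tau_2)^{*}$, and then manufacture a $\tau_2$ $\delta$-open neighbourhood of $a$ missing $A$. The one place you genuinely diverge is the last step, and your version is cleaner: you take the half-space $U=\{x:\,Re f(x)>r_2\}$, which is $\tau_2$ $\delta$-open as the preimage of an open set under the $\delta$-continuous $f$, contains $a$, and misses $A$ outright. The paper instead uses the absolute convexity of $A$ to argue that $f(A)$ is a disc centred at $0$, deduces $\sup_{x\in A}|f(x)|\leq r_1<|f(a)|$, and takes the neighbourhood $a+\{z:|f(z)|<\epsilon\}$. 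Your route never uses absolute convexity, so it actually establishes the statement for all convex $\delta$-closed sets, matching the weaker hypothesis under which Theorem \ref{th7} itself operates; the paper's stronger hypothesis is only needed for its particular choice of neighbourhood. Your extra care in justifying $\theta\notin\delta$-$cl_{\tau_1}(A-a)$ via translation invariance of regular open sets (hence of $\delta$-closure) is a point the paper passes over silently, and is worth keeping.
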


\begin{proof}
Suppose a convex set $A\subseteq E$ is $\tau_1$ $\delta$-closed and $a\notin A$. Then $A-a$ is $\tau_1$ $\delta$-closed and $\theta\notin A-a$, that is, $\theta\notin \delta$-$cl_{\tau_1}(A-a)$. Then by Theorem  \ref{th7}, there exists $\tau_1$ $\delta$-continuous functional $f$ and real numbers $r_1$, $r_2$ with $r_1<r_2$ such that $Re f(x)<r_1<r_2< Re f(a)$ for all $x\in A$. So, $\sup_{x\in A}Re f(x)\leq r_1<r_2< Re f(a)$. Since $A$ is  absolutely convex  and $f$ is linear, $f(A)$ an is  absolutely convex subset of $K$. So, $f(A)$ is a circular disc with centre at $0$. Thus $|f(x)|\leq r_1<r_2< Re f(a)\leq |f(a)|$ for all $x\in A$, that is, $\sup_{x\in A}|f(x)|\leq r_1<r_2< |f(a)|$. So there exists $\epsilon>0$ such that $\sup_{x\in A}|f(x)|+ \epsilon < |f(a)|$. So, $|f(x)|+ \epsilon < |f(a)|$ for all $x\in A$, or, $|f(x)-f(a)|\geq |f(a)|-|f(x)|>\epsilon$ for all $x\in A$. Let $U=\{z\in E:\, |f(z)|<\epsilon\}$. By hypothesis $f$ is a $\tau_2$ $\delta$-continuous functional and hence $U$ is a $\tau_2$ $\delta$-open neighbourhood of $\theta$. So $a+U$ is a $\tau_2$ $\delta$-open neighbourhood of $a$. We now show that $(a+U)\cap A=\phi$. If $b\in A$ with $a+u=b$ for some $u\in U$. Then $|f(b)-f(a)|>\epsilon$, that is, $|f(b-a)|>\epsilon$, that is, $|f(u)|>\epsilon$ for some $u\in U$ which is a contradiction. Thus $a+U$ is a $\tau_2$ $\delta$-open neighbourhood of $a$ disjoint from $A$. Hence $A$ is $\tau_2$ $\delta$-closed.\\
 Interchanging $1$ and $2$, we can similarly show that $A$ is $\tau_2$ $\delta$-closed implies $A$ is $\tau_1$ $\delta$-closed.
\end{proof}

\begin{theorem}\label{th8}
Let $(E,\tau)$  be a locally convex space over the field $K$. If $B$ is a convex subset of $E$, then $\sigma(E, \delta$-$E^{*})$-closure of $B=$delta $\tau$-closure of $B$, that is, $cl_{\sigma(E, \delta-E^{*})}(B)=\delta$-$cl_\tau(B)$.
\end{theorem}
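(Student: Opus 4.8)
The plan is to prove the two inclusions $\delta$-$cl_\tau(B)\subseteq cl_\sigma(B)$ and $cl_\sigma(B)\subseteq\delta$-$cl_\tau(B)$ separately, writing $\sigma=\sigma(E,\delta$-$E^{*})$ for brevity. As in Theorems \ref{th6} and \ref{th7}, it suffices to treat real scalars, passing to the complex case through $f(x)=f_0(x)-if_0(ix)$. Two standing facts will be used repeatedly: by Note \ref{n1} every $f\in\delta$-$E^{*}$ is ordinarily continuous on $(E,\sigma)$, so each real half-space $\{z\in E: f(z)<c\}$ is $\sigma$-open; and each $f\in\delta$-$E^{*}$ is, by the very definition of $\delta$-$E^{*}$, a $\delta$-continuous functional on $(E,\tau)$.

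For the inclusion $\delta$-$cl_\tau(B)\subseteq cl_\sigma(B)$ I would argue by contraposition. Suppose $x\notin cl_\sigma(B)$. Then some basic $\sigma$-neighbourhood $V=x+\cap_{i=1}^{n}\{z\in E:|f_i(z)|<r_i\}$ of $x$, with $f_i\in\delta$-$E^{*}$ and $r_i>0$, satisfies $V\cap B=\phi$. Each $\{z:|f_i(z)|<r_i\}=f_i^{-1}(B(0,r_i))$ is $\delta$-open in $(E,\tau)$ because $f_i$ is $\delta$-continuous; their finite intersection is $\delta$-open by the preliminary theorem on finite intersections of $\delta$-open sets; and the translate by $x$ is again $\delta$-open since translations are homeomorphisms of $(E,\tau)$ and therefore carry regular-open, hence $\delta$-open, sets to sets of the same kind. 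Thus $V$ is a $\delta$-open $\tau$-neighbourhood of $x$ disjoint from $B$, so $x\notin\delta$-$cl_\tau(B)$. Note that this direction uses neither convexity of $B$ nor local convexity of $E$; it is essentially formal once one observes that the defining $\sigma$-neighbourhoods are built from $\delta$-continuous functionals.

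The substantial inclusion is $cl_\sigma(B)\subseteq\delta$-$cl_\tau(B)$, and here I would feed the hypothesis into the separation Theorem \ref{th7}. Arguing again by contraposition, assume $x\notin\delta$-$cl_\tau(B)$; in particular $x\notin B$, so $A:=\{x\}$ and $B$ are disjoint non-void convex sets. The map $z\mapsto x-z$ is a homeomorphism of $(E,\tau)$, and since $\delta$-closure commutes with homeomorphisms one has $\delta$-$cl_\tau(\{x\}-B)=x-\delta$-$cl_\tau(B)$; consequently $\theta\in\delta$-$cl_\tau(\{x\}-B)$ would force $x\in\delta$-$cl_\tau(B)$, which is false. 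Hence $\theta\notin\delta$-$cl_\tau(A-B)$, so Theorem \ref{th7} furnishes $f\in\delta$-$E^{*}$ and reals $r_1<r_2$ with $f(x)<r_1<r_2<f(y)$ for all $y\in B$. The half-space $U=\{z\in E:f(z)<r_1\}$ is $\sigma$-open (as $f$ is $\sigma$-continuous), contains $x$, and is disjoint from $B$ since $f(y)>r_2>r_1$ on $B$; therefore $x\notin cl_\sigma(B)$. Combining the two inclusions gives the equality $cl_{\sigma(E,\delta\text{-}E^{*})}(B)=\delta$-$cl_\tau(B)$.

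The main obstacle I anticipate lies entirely in this second inclusion, and specifically in the bookkeeping required to invoke Theorem \ref{th7}: establishing that $\delta$-closure is invariant under the homeomorphism $z\mapsto x-z$ (so that $\theta\notin\delta$-$cl_\tau(\{x\}-B)$ is genuinely equivalent to the hypothesis $x\notin\delta$-$cl_\tau(B)$), and then confirming that the functional produced by the separation theorem, precisely because it is $\sigma$-continuous, carves out an honest $\sigma$-open neighbourhood of $x$ missing $B$. The first inclusion, by contrast, needs only the permanence properties of $\delta$-open sets already available in the preliminaries.
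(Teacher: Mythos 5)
Your proposal is correct and follows essentially the same route as the paper: the inclusion $\delta$-$cl_\tau(B)\subseteq cl_{\sigma}(B)$ comes from observing that basic $\sigma(E,\delta$-$E^{*})$-neighbourhoods are $\delta$-open in $(E,\tau)$, and the reverse inclusion comes from separating a point $a\notin\delta$-$cl_\tau(B)$ from $B$ via Theorem \ref{th7} and using the resulting functional to build a $\sigma$-neighbourhood of $a$ missing $B$. Your extra care in justifying $\theta\notin\delta$-$cl_\tau(\{x\}-B)$ through translation-invariance of the $\delta$-closure is a point the paper passes over implicitly, but the argument is the same in substance.
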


\begin{proof}
 Let $a\notin \delta$-$cl_\tau(B)$. Since $B$ and $\{a\}$ are disjoint convex sets in $E$ and $\theta\notin \delta$-$cl_\tau(B)-a=\delta$-$cl_\tau( \delta$-$cl_\tau(B)-a)$, by Theorem \ref{th7}, there exist $f\in\delta$-$E^{*}$ and real numbers $r_1$ and $r_2$ with $r_1< r_2$ such that $Re f(x)<r_1<r_2< Re f(a)$ for all $x\in \delta$-$cl_\tau(B)$. Since  $f\in\delta$-$E^{*}$,  by Remark \ref{r1}, $f\in \delta$-$(E, \sigma (E, \delta$-$E^{*}))^{*}$. Then $Re f\in \delta$-$(E, \sigma (E, \delta$-$E^{*}))^{*}$. So $U=\{x\in E:\,|Re f(x)|<r_2-r_1\}$ is a delta $\sigma (E, \delta$-$E^{*})$ neighbourhood of $\theta$. Then $a+U$ is a delta $\sigma (E, \delta$-$E^{*})$ neighbourhood of $a$. We now show that $(a+U)\cap B=\phi$. If possible let $b=a+u$ for some $b\in B$ and $u\in U$. Then $|Re f(b-a)|<r_2-r_1$, that is, $|Re f(b)-Re f(a)|<r_2-r_1$ for some $b\in B\subseteq \delta$-$cl_\tau(B)$ which is a contradiction. Thus $a\notin cl_{\sigma(E, \delta-E^{*})}(B)$. So, $cl_{\sigma(E, \delta-E^{*})}(B)\subseteq \delta$-$cl_\tau(B)$.\\
Again let $a\in\delta$-$cl_\tau(B)$ and $U$ be any $\sigma(E, \delta$-$E^{*})$ open neighbourhood of  $\theta$. Then there exist $f_i\in \delta$-$E^{*}$ and $r_i>0$ for $i=1, 2, \cdots, n$ such that $U=\cap_{i=1}^n\{x\in E: f_i(x)<r_i\}$. Since $f_i\in \delta$-$E^{*}, \,\{x\in E: f_i(x)<r_i\}$ is delta open set on $(E, \tau)$ for $i=1, 2, \cdots, n$. So, $U$ is delta open set on $(E, \tau)$ containing $\theta$ and hence $(a+U)\cap B\neq\phi$. Therefore $a\in cl_{\sigma(E, \delta-E^{*})}(B)$. Thus $\delta$-$cl_\tau(B)\subseteq cl_{\sigma(E, \delta-E^{*})}(B)$. Hence $cl_{\sigma(E, \delta-E^{*})}(B)=\delta$-$cl_\tau(B)$.
\end{proof}

\begin{corollary}
If $(E, \tau)$ be a locally convex space then\\
$(i)$ a convex subset $B$ of $E$ is delta $\tau$-closed if and only if it is $\sigma(E, \delta$-$E^{*})$-closed.
\end{corollary}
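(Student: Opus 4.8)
The plan is to derive this directly from Theorem \ref{th8}, since the corollary is essentially a restatement of that theorem phrased in terms of closedness rather than closure operators. First I would recall the two standard fixed-point characterizations of closedness through the respective closure operators: a subset $S\subseteq E$ is $\sigma(E, \delta$-$E^{*})$-closed if and only if $S=cl_{\sigma(E, \delta-E^{*})}(S)$, and $S$ is delta $\tau$-closed if and only if $S=\delta$-$cl_\tau(S)$. The second of these rests on the earlier result that $\delta$-$cl_\tau(S)$ is always $\delta$-closed, together with the fact that a $\delta$-closed set coincides with its own $\delta$-closure.

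Then, for a convex subset $B$, Theorem \ref{th8} supplies the key identity $cl_{\sigma(E, \delta-E^{*})}(B)=\delta$-$cl_\tau(B)$. Substituting this equality into the two characterizations above, I would observe that the condition $B=cl_{\sigma(E, \delta-E^{*})}(B)$ is literally the same condition as $B=\delta$-$cl_\tau(B)$. Hence $B$ is $\sigma(E, \delta$-$E^{*})$-closed if and only if it is delta $\tau$-closed, which is exactly statement $(i)$.

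I do not anticipate any genuine obstacle, as the substantive content has already been carried by Theorem \ref{th8}; the corollary merely translates the equality of the two closures into an equivalence of the two closedness properties. The only point worth a word of justification is that both \emph{closed} and \emph{$\delta$-closed} are fixed-point conditions for their associated closure operators, so that an equality of the two operators evaluated at $B$ transfers immediately to an equivalence of the two closedness notions. Convexity of $B$ enters only as the hypothesis under which Theorem \ref{th8} guarantees that the two closures agree; no further appeal to convexity is needed.
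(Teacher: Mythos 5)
Your argument is correct and is exactly the route the paper takes: the paper's proof of this corollary is simply ``Follows from Theorem \ref{th8},'' and your elaboration---that closedness and $\delta$-closedness are the fixed-point conditions of their respective closure operators, so the equality $cl_{\sigma(E, \delta-E^{*})}(B)=\delta$-$cl_\tau(B)$ transfers directly to the stated equivalence---is the intended justification. Nothing further is needed.
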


\begin{proof}
Follows from the Theorem \ref{th8}.
\end{proof}

\section*{Conclusion}
To define delta weak topology, we have used the definition of $\delta$-continuity as stated by  R. M. Latif \cite{Latif}. Then find out the characteristic of the set of $\delta$-continuities and $\delta$-closure of a set on delta weak topological spaces.    
So, one can try to define another type of $\Gamma$-topology with the help of T. Noiri's \cite{Noiri} definition of $\delta$-continuity and to find out the behaviour of various properties on these spaces.

\section*{Acknowledgement}
I acknowledge my wife Mrs. Krishna Roy for her valuable suggestions in linguistic and grammatical parts of this paper.

\end{document}